\newtheorem{theorem}{Theorem}[section]
\newtheorem{corollary}[theorem]{Corollary}
\newtheorem{proposition}[theorem]{Proposition}
\newtheorem{lemma}[theorem]{Lemma}
\theoremstyle{definition}
\newtheorem{definition}[theorem]{Definition}
\newtheorem{example}[theorem]{Example}
\numberwithin{equation}{section}
\newcommand{\norm}{\left\Vert\,\cdot\,\right\Vert}
\newcommand {\C}{\mathbb C}
 \newcommand {\N}{\mathbb N}
\newcommand {\Z}{\mathbb Z}
\newcommand {\F}{\mathbb F}
\newcommand{\lv}{\left\vert}
\newcommand{\rv}{\right\vert}
\newcommand{\lV}{\left\Vert}
\newcommand{\rV}{\right\Vert}
\newcommand{\bb}{{\,\scriptstyle {\Box}}\,}
\newcommand{\s}{\smallskip}
\def\sharp{\#}
\def\supp{\mathop{\rm supp\,}}
\def\notd{{\raise.03em\hbox{{\hbox{\it\scriptsize /}}}\kern-.575em\mid}\,}
\title{Radicals of some semigroup algebras}
\author{H.~G.~Dales, D.~Strauss, Y.~Zelenyuk, and Yu.~Zelenyuk}
\begin{document} 
\maketitle

\begin{abstract}
 In this paper we seek to determine the Jacobson radical of certain algebras based on semigroups, and in particular  on the semigroups 
$(\beta S, \Box)$, where $S$ is a cancellative, countable, abelian semigroup and $\beta S$ is its  Stone--\v{C}ech semigroup com\-pactification. In particular, we wish to determine the radical of $\ell^{\,1}(\beta \N)$.
\end{abstract}
 
\section{Introduction} 

\noindent We first recall the basic definitions and properties of the (Jacobson) radical of an algebra; see \cite{D} for details.
 
 Let $A$ be a (complex, associative)  algebra.   The algebra formed by adjoining an identity to $A$ is denoted by $A^{\sharp}$, with $  A^{\sharp}= A$ when 
$A$ already has an identity. The  identity of $A^{\sharp}$ is denoted by  $e_A$. The {\it radical\/} of $A$, denoted by  $J(A)$, is defined to be the
 intersection of the maximal left ideals of $A^{\sharp}$; 
it is also equal to the intersection of the maximal right ideals of $A^{\sharp}$, and so it is an ideal in $A$. The algebra $A$ is {\it semisimple\/} 
if $J(A)=\{0\}$; the quotient algebra $A/J(A)$ is a semisimple algebra.

Let $A$ be an algebra.   An element $a\in A$ is {\it nilpotent} if $a^n= 0$  for some $n\in\N$; the minimum such $n$ is the {\it index\/} of $a$;  
the set of nilpotent elements of $A$ is denoted by ${\mathcal N}(A)$.   An element $a\in A$ 
is {\it quasi-nilpotent} if  $ze_A-a$ is invertible in $A^{\sharp}$ for each $z \in \C$ with $z\neq 0$;  the set of quasi-nilpotent elements of $A$ is denoted by
 ${\mathcal Q}(A)$.   Trivially,  ${\mathcal N}(A) \subset {\mathcal Q}(A)$.
 
A characterization of $J(A)$ is as follows \cite[Proposition 1.5.32(ii)]{D}.  
 
 \begin{theorem}  \label{1.1}
Let $A$ be an algebra. Then
\[
J(A) = \{a \in A : ba \in {\mathcal Q}(A)\;\; (b\in A^{\sharp})\} = \{a \in A : ab \in {\mathcal Q}(A)\;\; (b\in A^{\sharp})\}\,.
\qedhere \]
\end{theorem}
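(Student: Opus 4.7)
The plan is to reduce the result to the following standard characterisation of $J(A)$:
\[
J(A) = \{a \in A : e_A - ba \text{ is invertible in } A^\sharp \text{ for every } b \in A^\sharp\},
\]
which I would establish as a preliminary lemma. The forward direction of that lemma is the usual maximal-ideal argument: if $a \in J(A)$ but $e_A - ba$ lacked a left inverse for some $b$, the proper left ideal $A^\sharp(e_A - ba)$ would sit inside some maximal left ideal $M$ of $A^\sharp$, and then $ba \in J(A) \subseteq M$ would force $e_A \in M$, a contradiction. The converse runs similarly: if $a \notin J(A)$, pick a maximal left ideal $M$ with $a \notin M$, use maximality to write $e_A = m + ba$ with $m \in M$, and conclude that $m = e_A - ba$ cannot be left invertible. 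Finally, a left inverse $c$ of $e_A - ba$ satisfies $c - e_A = cba \in J(A)$; re-running the left-invertibility argument on $c$ (or invoking the symmetric description of $J(A)$ via maximal right ideals) promotes ``left inverse'' to ``two-sided inverse''.

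Granted the lemma, the theorem is a short scaling argument. For the inclusion $\subseteq$, let $a \in J(A)$ and $b \in A^\sharp$; since $J(A)$ is an ideal, $z^{-1}ba \in J(A)$ for every $z \in \C \setminus \{0\}$, so $e_A - z^{-1}ba$ is invertible by the lemma, whence $ze_A - ba = z(e_A - z^{-1}ba)$ is invertible in $A^\sharp$. This gives $ba \in \mathcal{Q}(A)$. For the inclusion $\supseteq$, suppose $ba \in \mathcal{Q}(A)$ for all $b \in A^\sharp$; specialising to $z = 1$ in the definition of quasi-nilpotence shows that $e_A - ba$ is invertible in $A^\sharp$ for every $b \in A^\sharp$, and the lemma then delivers $a \in J(A)$. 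The right-multiplicative equality follows by the symmetric argument, using the parallel description of $J(A)$ as the intersection of the maximal right ideals of $A^\sharp$.

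The main obstacle is concentrated in the preliminary lemma, and specifically in the step that promotes one-sided invertibility of $e_A - ba$ to two-sided invertibility; this is what forces one to use the symmetric description of $J(A)$ via right ideals. Once that lemma is in hand, the remainder of the argument is essentially arithmetic on the scalar $z$.
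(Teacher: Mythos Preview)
Your argument is correct and follows the standard textbook route to this characterisation. Note, however, that the paper does not actually prove Theorem~\ref{1.1}: it is stated with a reference to \cite[Proposition 1.5.32(ii)]{D} and closed immediately with \verb|\qedhere|, so there is no ``paper's own proof'' to compare against. Your proposal supplies precisely the kind of proof one finds in the cited source: first the quasi-invertibility (left-quasi-regularity) description of $J(A)$ via maximal left ideals, then the elementary scaling $ze_A - ba = z(e_A - z^{-1}ba)$ to pass from ``$e_A - ba$ invertible for all $b$'' to ``$ba$ quasi-nilpotent for all $b$''. The one place to be slightly careful is the promotion from left to two-sided invertibility of $e_A - ba$; your sketch handles this correctly by observing that a left inverse $c$ satisfies $c = e_A + cba$ with $cba \in J(A)$, so $c$ itself is left invertible and hence (having $e_A - ba$ as a right inverse) genuinely invertible.
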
\s
 
Thus, for $a\in A$, we have $a \in J(A)$ if and only if, for each $b \in A$, there exists $c \in A$ with $ba + c = cba$.  

 It follows from Theorem \ref{1.1} that   $J(A) \subset {\mathcal Q}(A)$.
 In general, $J(A) \subsetneq {\mathcal Q}(A)$, and neither ${\mathcal N}(A)$ nor ${\mathcal Q}(A)$ is closed under either sums or products; 
this is shown by simple examples of $2 \times 2$ matrices. 

We shall use the following standard result;      clauses 
(i) and (ii) are contained in   \cite[Theorem 1.5.4]{D}, and clause (iii)  follows from \cite[Corollary 1.5.3(ii)]{D}.

\begin{proposition} \label{1.2}
  Let $A$ be an algebra, and let $I$ be an ideal in $A$.\s
 
{\rm (i)}    $J(I)= J(A)\cap I$.\s

{\rm (ii)}  Suppose that   $I\subset J(A)$.  Then  $J(A/I) = J(A)/I$. \s

{\rm (iii)}  Suppose that  $A/I$ semisimple. Then $J(A) \subset I$.\quad\qedhere 
\end{proposition}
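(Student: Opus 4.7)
The strategy is to derive all three clauses from Theorem \ref{1.1} together with the standard lattice correspondence between (modular) left ideals of $A^\sharp$ containing $I$ and left ideals of $(A/I)^\sharp$.

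For (i), the essential preliminary observation is that $\mathcal{Q}(I)=\mathcal{Q}(A)\cap I$. Indeed, suppose $x\in I$ and $z\in\C\setminus\{0\}$ are such that $(ze_A-x)^{-1}=z^{-1}e_A+w$ in $A^\sharp$ for some $w\in A$; expanding the defining identity $(ze_A-x)(z^{-1}e_A+w)=e_A$ gives $zw=xw+z^{-1}x\in I$, whence $w\in I$, so the inverse already lives in $I^\sharp$. The converse inclusion is immediate. Granted this, the inclusion $J(A)\cap I\subset J(I)$ follows at once from Theorem \ref{1.1} because $I^\sharp\subset A^\sharp$. For the reverse inclusion, take $a\in J(I)$ and $b\in A$; then $(ab)^2=a(bab)$ with $bab\in I$, so $(ab)^2\in\mathcal{Q}(I)\subset\mathcal{Q}(A)$, which forces $ab\in\mathcal{Q}(A)$ since $\mathrm{spec}(y^2)\setminus\{0\}=\{\lambda^2:\lambda\in\mathrm{spec}(y)\setminus\{0\}\}$. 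A second application of Theorem \ref{1.1} puts $a$ in $J(A)\cap I$.

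For (ii), the hypothesis $I\subset J(A)$ implies $I$ is contained in every maximal modular left ideal of $A^\sharp$; thus every such ideal $M$ descends to a maximal left ideal $M/I$ of $(A/I)^\sharp$, and conversely every maximal left ideal of $(A/I)^\sharp$ lifts to a maximal modular left ideal of $A^\sharp$ containing $I$. Intersecting under this bijection yields $J(A/I)=J(A)/I$. For (iii), the semisimplicity of $A/I$ says that the intersection of all maximal modular left ideals of $(A/I)^\sharp$ is $\{0\}$; lifting gives that the intersection of all maximal left ideals of $A^\sharp$ that happen to contain $I$ is exactly $I$. Since $J(A)$ is contained in \emph{every} maximal modular left ideal of $A^\sharp$, in particular those containing $I$, we obtain $J(A)\subset I$.

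The main technical nuisance is the bookkeeping around unitizations when $A$ is non-unital: $(A/I)^\sharp$ is not literally $A^\sharp/I$, so the correspondence used in (ii) and (iii) should be phrased in terms of maximal modular left ideals of $A$ rather than left ideals of $A^\sharp$, or one must carry an extra one-dimensional summand through each identification. Once this is set up cleanly, the three clauses are formal consequences of Theorem \ref{1.1}.
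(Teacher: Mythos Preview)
Your argument is correct. The paper itself does not supply a proof of Proposition~\ref{1.2}; it simply records the result as standard, referring clauses (i) and (ii) to \cite[Theorem~1.5.4]{D} and clause (iii) to \cite[Corollary~1.5.3(ii)]{D}. So there is nothing in the paper to compare your proof against beyond the citation.

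That said, your approach is a sound self-contained treatment. For (i), the key identity $\mathcal{Q}(I)=\mathcal{Q}(A)\cap I$ is handled correctly (the inverse of $ze_A-x$ automatically lands in $I^\sharp$ when $x\in I$), and the trick $(ab)^2=a(bab)$ together with the polynomial spectral mapping $\sigma(y^2)=\{\lambda^2:\lambda\in\sigma(y)\}$ is exactly what is needed; note that this spectral mapping holds in any associative algebra over $\C$ because the factors $\lambda e\pm y$ commute, so invertibility of their product forces invertibility of each. For (ii) and (iii) you use the standard correspondence between maximal modular left ideals of $A$ containing $I$ and those of $A/I$, which is the textbook route and presumably what lies behind the cited references. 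Your closing caveat about unitizations is apt: phrasing everything in terms of maximal \emph{modular} left ideals of $A$ (rather than maximal left ideals of $A^\sharp$) avoids the mismatch between $(A/I)^\sharp$ and $A^\sharp/I$ in the non-unital case.
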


 Now let $A$ be a Banach   algebra.  Then $J(A)$ is a closed ideal in $A$, and $A/J(A)$ is a semisimple Banach algebra.  The spectral radius of
 $a \in A$ is  denoted by $\nu_A$, and so
\[
{\mathcal Q}(A)= \{ a \in A: \nu_A(a) = 0\} = \{ a \in A: \sigma(a)=\{0\}\}\,,
\]
where $\sigma(a)$ is the spectrum of $a\in A$. By the spectral radius formula \cite[Theorem 2.3.8]{D},
\begin{equation}\label{(2.1)}
\nu_A(a) = \lim_{n\to \infty} \lV a^n\rV^{1/n}\,.
\end{equation}
Suppose that $B$ is a subalgebra of $A$. Then $\lim_{n\to \infty}\lV b^n\rV^{1/n} =0$ for each $b\in J(B)$. 
  In the case where the Banach algebra $A$ is commutative, we have ${\mathcal N}(A)\subset 
J(A)= {\mathcal Q}(A)$; ${\mathcal N}(A) $ is not necessarily either closed or dense in $J(A)$.
For non-commutative Banach algebras, it may be that   ${\mathcal Q}(A)$ is  not $\norm$-closed in  $A$ \cite[Example 2.3.15]{D}. 
 For a normed algebra $A$, it may be that $J(A)$ is not closed in $A$ \cite[$\S10$]{Dix}.\s

\begin{proposition} \label{1.2a}
 Let $A$ be a Banach  algebra,  and let $B$ be a closed  subalgebra of $A$. Then $J(A)\cap B \subset J(B)$.
\end{proposition}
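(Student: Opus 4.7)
The plan is to use the characterization of the radical from Theorem~\ref{1.1} together with the stability of the spectral radius under passage to a closed subalgebra, which is immediate from the spectral radius formula. Fix $b \in J(A)\cap B$. To show $b \in J(B)$, it suffices by Theorem~\ref{1.1} to verify that $cb \in \mathcal{Q}(B)$ for every $c \in B^{\sharp}$.

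First I would record the key observation: since $B$ is a closed subalgebra of $A$, the norm on $B$ is simply the restriction of the norm on $A$, and so the spectral radius formula gives $\nu_B(x) = \lim_{n\to\infty}\lV x^n\rV^{1/n} = \nu_A(x)$ for every $x \in B$. In particular, $\mathcal{Q}(B) = \mathcal{Q}(A)\cap B$. Now take an arbitrary $c \in B^{\sharp}$, and write $c = \lambda e_B + c'$ with $\lambda \in \C$ and $c' \in B$ (taking $\lambda = 0$ if $B$ happens to be unital and $c \in B$). Then $cb = \lambda b + c'b$ lies in $B$, and this same element is also equal to $\widetilde{c}\,b$, where $\widetilde{c} := \lambda e_A + c' \in A^{\sharp}$. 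Since $b \in J(A)$, Theorem~\ref{1.1} applied in $A$ yields $\widetilde{c}\,b \in \mathcal{Q}(A)$, and therefore $cb \in \mathcal{Q}(A)\cap B = \mathcal{Q}(B)$, as required.

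The only point requiring any care is the bookkeeping around unitization, since when neither $A$ nor $B$ has an identity the formally adjoined units $e_A$ and $e_B$ are distinct elements of distinct algebras; however, the product $cb$ always lies in $B\subset A$ and can be unambiguously identified in either setting, so this step reduces to a routine manipulation. I do not anticipate any substantial obstacle beyond this: the result should follow almost at once from the spectral radius formula and Theorem~\ref{1.1}.
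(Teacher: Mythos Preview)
Your proposal is correct and follows essentially the same route as the paper: both arguments use the characterization of the radical from Theorem~\ref{1.1} together with the spectral radius formula~(\ref{(2.1)}) to conclude that $\nu_B(x)=\nu_A(x)$ for $x\in B$, hence $\mathcal{Q}(A)\cap B=\mathcal{Q}(B)$. The only difference is that you spell out the unitization bookkeeping explicitly, whereas the paper's proof handles it in one line by quantifying over $b\in B$ rather than $B^{\sharp}$.
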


\begin{proof}  Let $a \in J(A)\cap B$, and take $b\in B$. Then $\nu_A(ba) =0$ and so  $\nu_B(ba) =0$ by (\ref{(2.1)}).  Thus $ba \in {\mathcal Q}(B)$, and 
so $a \in J(B)$.
\end{proof}\s

\begin{proposition} \label{1.2b}
 Let $A$ be a Banach  algebra,  and let  $I$ be a closed, left ideal  in $A$. Suppose that there is an element $u\in I$ such that $au \neq 0$ and $ua\neq 0$
 whenever $a \in A\setminus \{0\}$.   Then $I$ is semisimple if and only if $A$ is semisimple.
 \end{proposition}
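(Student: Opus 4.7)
The plan is to prove both implications by combining Proposition \ref{1.2a} with a single ``sandwiching'' construction: for any $a \in J(I)$, the element $uau \in I$ actually lies in $J(A)$. The non-degeneracy condition on $u$ then allows us to pass information back and forth between $A$ and $I$ without losing anything.

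For the implication ``$I$ semisimple $\Rightarrow A$ semisimple'' (the easier direction), I would take $a\in J(A)$ and look at $au$. Since $u\in I$ and $I$ is a left ideal, $au\in I$; since $J(A)$ is a two-sided ideal, $au\in J(A)\cap I$. As $I$ is a closed subalgebra, Proposition \ref{1.2a} gives $J(A)\cap I\subset J(I)=\{0\}$, so $au=0$, whence $a=0$ by the non-degeneracy of $u$.

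For the converse, assume $J(A)=\{0\}$ and take $a\in J(I)$. The key claim is that $uau\in J(A)$. By Theorem \ref{1.1} it suffices to verify $b(uau)\in\mathcal{Q}(A)$ for every $b\in A$. I would write
\[
b(uau)=(bu)(au);
\]
here $bu\in I$ (left ideal), $au\in I$, and so $(bu)(au)\in I$ since $I$ is a subalgebra. Now $J(I)$ is a two-sided ideal of $I$ and $u\in I$, so $au\in J(I)$; then by the definition of the radical applied to the element $bu\in I$, we get $(bu)(au)\in\mathcal{Q}(I)$, i.e.\ $\nu_I((bu)(au))=0$. Since $I$ is a closed subalgebra, the spectral radius formula (\ref{(2.1)}) gives $\nu_A((bu)(au))=\nu_I((bu)(au))=0$, so $b(uau)\in\mathcal{Q}(A)$. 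This establishes the claim. Then $uau\in J(A)=\{0\}$, so $uau=0$, and applying the non-degeneracy of $u$ twice yields $au=0$ and then $a=0$, proving $J(I)=\{0\}$.

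The main conceptual obstacle is choosing the right transfer element. The natural candidates $au$ or $ua$ do not visibly land in $J(A)$, because checking $b\cdot au\in\mathcal{Q}(A)$ requires controlling products of the form $b\cdot au$ where $b\in A$ need not belong to $I$, and the spectral radius on $I$ is only available after the whole expression is pushed into $I$. Sandwiching $a$ between two copies of $u$ is precisely what allows both outer multiplications by elements of $A$ to land inside $I$ (as $bu\in I$ on the left), reducing the quasi-nilpotence check to a statement about $J(I)$, where it holds by hypothesis. The remainder of the argument is then routine bookkeeping using Theorem \ref{1.1}, Proposition \ref{1.2a}, and the given non-degeneracy of $u$.
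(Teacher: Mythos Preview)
Your proof is correct. The approach largely parallels the paper's, but with one unnecessary complication in the harder direction.

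For ``$I$ semisimple $\Rightarrow A$ semisimple'' your argument via $au\in J(A)\cap I$ and Proposition~\ref{1.2a} is essentially the paper's (the paper just unpacks Proposition~\ref{1.2a} inline).

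For ``$A$ semisimple $\Rightarrow I$ semisimple'' you sandwich and use $uau$, whereas the paper uses the simpler element $ua$. Your claim that ``the natural candidates $au$ or $ua$ do not visibly land in $J(A)$'' is right for $au$ but wrong for $ua$: for $b\in A$ one has $b(ua)=(bu)a$, and $bu\in I$ because $I$ is a \emph{left} ideal, so $(bu)a\in\mathcal{Q}(I)\subset\mathcal{Q}(A)$ directly from $a\in J(I)$. Thus $ua\in J(A)=\{0\}$, and a single application of the non-degeneracy of $u$ gives $a=0$. Your $uau$ argument is the same trick applied once more than needed; it works, but the extra copy of $u$ (and hence the second appeal to non-degeneracy) is superfluous.

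One small technical point: Theorem~\ref{1.1} requires $b\in A^{\sharp}$, not just $b\in A$. This is harmless here since $uau\in J(I)\subset\mathcal{Q}(I)\subset\mathcal{Q}(A)$ covers the case $b=e_A$; the paper's proof has the same elision.
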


\begin{proof}   Suppose that $I$ is not semisimple, and take $a\in J(I)$ with $a\neq 0$.  Then $ua \in J(I)$.  Take $b\in A$. Then 
$bu \in I$, and so $bua \in {\mathcal Q}(I) \subset  {\mathcal Q}(A)$. Thus $ua \in J(A)$. Since $ua \neq 0$, $A$ is not semisimple.

Suppose that $A$ is not semisimple, and take $a\in J(A)$ with $a\neq 0$.  Then $au \in J(A)\cap I$.   Take $b \in I$. 
Then $bau \in {\mathcal Q}(A)\cap I= {\mathcal Q}(I)$, and so  $au \in J(I)$. Since $au \neq 0$, $I$ is not semisimple.
\end{proof}\s

In general, a closed subalgebra of a unital, semisimple Banach algebra is not necessarily semisimple. For example, let $A = {\mathbb M}_n$,
 the algebra of $n\times n$ matrices over $\C$, so that $A$ is a unital,  semisimple, finite-dimensional  Banach algebra, and let $B$ be the closed, 
unital subalgebra of upper-triangular matrices. Then  $J(B)$ consists of the matrices that are zero on the diagonal, and so $J(B)\neq\{0\}$ whenever
 $n\geq 2$.
There are also easy examples of commutative, radical Banach algebras  with a dense, semisimple subalgebra.\medskip

  \section{Semigroup algebras} 

\noindent Let $S$ be a non-empty set. Then $\ell^{\,1}(S)$ is the usual Banach space consisting of the functions $f \in \C^S$ such that 
\[
\lV f \rV = \sum_{s\in S}\lv f(s)\rv< \infty\,.
\]
For   an element $f \in \ell^{\,1}(S)$, the {\it support\/} of $f$ is
$
\supp f = \{s\in S  : f(s) \neq 0\}$.
Of course, $\supp f $ is always countable. The characteristic function of $\{s\}$ for an element $s \in S$ is denoted by $\delta_s$, and  a generic element of 
$\ell^{\,1}(S)$  is written as $\sum_{s\in S} f(s)\delta_s$. The linear space 
 spanned by the  functions $\delta_s$ is $\C S$; these are the elements of {\it finite support\/}.  Thus  $\C S$ is a dense subspace of $(\ell^{\,1}(S), \norm)$.
  
We shall consider algebras $\C S$ and $\ell^{\,1}(S)$ based on certain semigroups $S$.  We first recall some properties of semigroups; 
for a substantial study of semigroups, see \cite{HS} and \cite{How}.

Let $S$ be a semigroup, with product denoted by juxtaposition. An element $p\in S$ is an {\it idempotent\/} if $p^2= p\,$;
 the set of idempotents of the semigroup $S$ is denoted by $E(S)$.  For $s\in S$, we set $L_s(t) =st$ and $R_s(t)=ts$ for $t\in S$; for $T\subset S$, 
we write $sT =L_s(T)$ and $Ts=R_s(T)$.  
An element $s\in S$ is {\it cancellable\/} if both $L_s$ and $R_s$ are injective, and   $S$ is {\it cancellative\/} if each $s \in S$ is  
 cancellable;   $s\in S$ is {\it weakly cancellable\/} if   $\{u\in S : su=t\}$  and $\{u\in S : us=t\}$ are both finite for each $t\in S$, and   $S$ is 
{\it weakly cancellative\/} if each $s \in S$ is weakly cancellable.
A subset $T$ of a semigroup $S$ is a {\it left ideal\/} if $sT\subset T\,\;(s\in S)$, a {\it right ideal\/} if $Ts\subset T\,\;(s\in S)$, and an 
{\it ideal\/} if it is both a left and right ideal; ideals in $S$ are ordered by inclusion. 

A semigroup $S$ is abelian if $st =ts\,\;(s,t \in S)$, and in this case we usually denote the semigroup operation by `$+$'. Let $(S, +)$ be a cancellative,
  abelian semigroup.  Then there is an abelian group $(G,+)$ containing $S$ as a subsemigroup and such that each $x\in G$
 can be expressed as $x=s-t$ for some $s,t \in S$; $G$ is called the {\it group of quotients} of $S$.
  
 A semigroup $S$ with a topology $\tau$ is a {\it compact, right topological semigroup\/}  if $(S,\tau)$ is a compact (Hausdorff) space 
and the map $R_s$  is continuous for each $s\in S$. These important semigroups are studied in \cite{HS}.
 
Study of the semigroups that will concern us is based on the following structure theorem; it is stated in somewhat more generality than we require. 
See \cite[$\S2.2$]{HS} for a much more general version.\s

\begin{theorem}\label{str} 
Let $V$ be a compact, right topological semigroup. \smallskip

 {\rm (i)} A unique minimum ideal $K(V)$ exists in $V$. The families of minimal left ideals and of minimal right ideals of $V$ both partition $K(V)$.\smallskip

{\rm (ii)} For  each  minimal right and left  ideals $R$  and $L$ in $V$, there exists $p\in E(V)\cap R \cap L$ such that   $R\cap L = RL = pVp$ is a  group; 
these groups are maximal in $K(V)$, are pairwise isomorphic, and the family of these groups partitions $K(V)$.\s

{\rm (iii)}   For each $p,q \in K(V)$, the subset $pK(V)q$ is a subgroup of $V$, and there exists $r \in E(K(V))$ with $rp=p$ and $qr=q$.\quad\qedhere  
\end{theorem}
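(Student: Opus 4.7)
The plan is to prove (i), (ii), (iii) in order, with Ellis's lemma (every non-empty closed subsemigroup of $V$ contains an idempotent) as the central technical tool.

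For minimal left ideals in (i), I would start from the fact that $Vs = R_s(V)$ is a compact left ideal for any $s \in V$, by continuity of $R_s$. Zorn's lemma applied to the non-empty closed left ideals contained in $Vs$, ordered by reverse inclusion (with chain intersections non-empty by compactness), yields a minimal one $L$; for any $p \in L$ the closed left ideal $Vp \subset L$ must equal $L$, so every minimal left ideal has the form $Vp$ and is automatically closed. Ellis's lemma itself would come from a second Zorn argument on non-empty closed subsemigroups of a given closed subsemigroup: in a minimal such subsemigroup $M$, continuity of $R_q$ forces $Mq = M$ for every $q \in M$, whence the closed subsemigroup $\{m \in M : mq = q\}$ is non-empty and equals $M$, giving $q^2 = q$. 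Each minimal left ideal is itself a closed subsemigroup ($L\cdot L \subset V\cdot L \subset L$) and so contains an idempotent.

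Define $K(V)$ to be the union of all minimal left ideals. It is trivially a left ideal, and it is a right ideal because for $L = Vp$ minimal and $s \in V$, $Ls$ is again a minimal left ideal: for $t = \ell s \in Ls$ one has $Vt = (V\ell)s = Ls$ by minimality of $L$. $K(V)$ is contained in every ideal, since any ideal contains $Vs$ for one of its elements and hence a minimal left ideal; distinct minimal left ideals are disjoint, as a non-empty intersection would be a strictly smaller left ideal. For minimal right ideals one needs a parallel development, but with more care because $L_s$ is not continuous: one verifies, using the idempotents just produced and the structure of $K(V)$ as the union of minimal left ideals, that the sets $pV$ with $p \in E(V) \cap K(V)$ are precisely the minimal right ideals and partition $K(V)$.

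For (ii), given a minimal right ideal $R$ and a minimal left ideal $L$, the product $RL$ is non-empty and contained in $R \cap L$, so $R \cap L$ is a non-empty closed subsemigroup containing an idempotent $p$ by Ellis. Minimality applied to $pV \subset R$ and $Vp \subset L$ yields $R = pV$ and $L = Vp$, hence $R \cap L = RL = pVp$. To see $pVp$ is a group with identity $p$: for $a \in pVp$, minimality of $Vp$ forces $Va = Vp$, producing $w \in V$ with $wa = p$; then $pwp \in pVp$ and $(pwp)a = p(wa) = p$ (using $pa = a$), giving a left inverse. Maximality of $pVp$, pairwise isomorphism of these groups via left- and right-multiplication maps between idempotents lying in common minimal one-sided ideals, and the partition of $K(V)$ by them are then read off from the idempotent structure.

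For (iii), given $p, q \in K(V)$, take the minimal right ideal $R_p$ through $p$ and the minimal left ideal $L_q$ through $q$; by (ii) there is an idempotent $r \in R_p \cap L_q$. Minimality of $R_p$ gives $rV = R_p$, so $p = rv$ for some $v$, whence $rp = r^2 v = rv = p$; symmetrically $qr = q$. The identification $pK(V)q = R_p \cap L_q$ as a subgroup is then a direct calculation: the inclusion $pK(V)q \subset R_p \cap L_q$ is immediate (each $pxq$ lies in both the right ideal $R_p$ and the left ideal $L_q$), and the reverse inclusion comes from writing any element of the group as $ryr$ and exploiting $r \in pVq$ together with the ideal property of $K(V)$. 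The main obstacle throughout is the asymmetry caused by the fact that only right translations are continuous: every topological argument must be routed via $R_s$, and the right-ideal half of the structure theory has to be bootstrapped from the left-ideal half using idempotents from Ellis's lemma as a substitute for the missing continuity of $L_s$.
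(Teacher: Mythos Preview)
The paper does not prove this theorem; it is stated as background with a \qedhere\ and a pointer to \cite[\S2.2]{HS} for a more general version. There is therefore no in-paper argument to compare against, and your outline follows the standard route (Zorn on closed left ideals, Ellis's lemma, $K(V)$ as the union of minimal left ideals, bootstrapping the right-ideal structure from the idempotents). The overall architecture is correct and is essentially the textbook proof.

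There is, however, one genuine gap. In part (ii) you assert that $R\cap L$ is a ``non-empty closed subsemigroup'' and then invoke Ellis's lemma to obtain an idempotent in it. But minimal right ideals of a compact right topological semigroup need not be closed: only the maps $R_s$ are continuous, so $pV=L_p(V)$ has no reason to be compact, and hence neither does $R\cap L$. Ellis's lemma is therefore not available at that point. The repair is already implicit in your treatment of (i): since $L$ \emph{is} closed, Ellis gives an idempotent $e\in L$, and you have shown that $eVe$ is a group with identity $e$. Pick any $a\in R\cap L$; then $a\in L=Ve$ gives $ae=a$ and $ea=eae\in eVe$, so $ea$ has an inverse $b\in eVe$, whence $ba=(be)a=b(ea)=e$. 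Now $ab\in aV=R$ and $ab\in Vb\subset Ve=L$, and $(ab)^2=a(ba)b=aeb=ab$, so $ab$ is the required idempotent in $R\cap L$. With this adjustment (produce the idempotent from the group $eVe$ rather than by a second appeal to Ellis), the remainder of your sketch for (ii) and (iii) goes through as written.
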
 \s

Let $S$ be a semigroup. Then there is a unique product $\star$ on $\ell^{\,1}(S)$  such that 
\[\delta_s \,\star\,\delta_t=\delta_{st}\quad(s,t \in S)
\]
such that  
$(\ell^{\,1}(S), \,\star\,, \norm)$   is a Banach algebra; this is the {\it semigroup algebra\/} of $S$.  Thus, given $f,g\in \ell^{\,1}(S)$, we have
\[
(f\,\star\, g)(t) = \sum\{f(r)g(s) : r,s\in S,\,rs=t\}\quad (t\in S)\,,
\]
where the sum is zero when there are no elements $r,s\in S$ with $rs=t$.  The space $\C S$, the `algebraist's semigroup algebra', is 
a dense subalgebra  of our Banach algebra $\ell^{\,1}(S)$.  For $n\in\N$, the $n^{\rm th}$ power of $f\in \ell^{\,1}(S)$ is denoted by $f^{*\,n}$.

For an extensive study of this Banach algebra, see \cite{DLS1}. \s

\begin{definition} \label{2.1}
Let $S$ be a semigroup. The radical of the semigroup algebra $(\ell^{\,1}(S),\,\star\,)$ 
is denoted by  $J(S)$, and the sets of nilpotents and quasi-nilpotents in $\ell^{\,1}(S)$ are denoted by ${\mathcal N}(S)$  and ${\mathcal Q}(S)$, respectively.
The radical of the   algebra $(\C S,\,\star\,)$  is denoted by  $J_0(S)$.  
\end{definition}\s

 Let  $S$ be a semigroup. Then it follows from Theorem \ref{1.1} that
\[
 J(S) = \{f \in \ell^{\,1}(S): g\,\star f   \in {\mathcal Q}(S)\;\; (g\in \ell^{\,1}(S))\}\,.
\]

Easy examples show that there are finite, abelian semigroups $S$ such that  $\ell^{\,1}(S)$ is not semisimple. 
 For example, set $S = \{o,s\}$ where $o^2=os=so =s^2=o$, so that $S$ is an abelian semigroup (and $S$ is a zero semigroup).  
Then set $f = \delta_o -\delta_s$. Clearly $f$ is nilpotent of index 2 and  $\delta_o\,\star\,f$  and $\delta_s\,\star\,f$ are zero, and so 
$J(S) = \C f\neq\{0\}$.

 Let $S$ be a finite semigroup.  Then a criterion  for $\C S$ to be semisimple is  given in \cite[Chapter 14, Theorem 31]{Ok}. 
 
In the case where $S$ is an abelian semigroup, conditions for $\ell^{\,1}(S)$ to be semisimple are given in \cite{HZ}: indeed, $\ell^{\,1}(S)$ is
 semisimple if and only if  $S$ is {\it separating\/}, in the sense that $s=t$ whenever $s,t \in S$ and $s^2=t^2=st$.    

 In the case where $G$ is a group, $J(G)=\{0\}$, and so $\ell^{\,1}(G)$ is semisimple \cite[Corollary 3.3.35]{D}. It is also true that $J_0(G)=\{0\}$; this is a theorem of Rickart,  proved in \cite[Theorem 7.1.1]{Pass}, for example.  Further, ${\mathcal Q}(G)=\{0\}$
 for each abelian group $G$. Indeed, there is a standard, more-general theorem.  Let $G$ be a locally compact group with Haar measure $m$,
 and let $L^1(G, m)$ be the corresponding group algebra of $G$. Then $L^1(G, m)$ is a semisimple Banach algebra \cite[Corollary 3.3.35]{D}. 
 
Let $S$ be a cancellative semigroup. We do not know if $\ell^{\,1}(S)$ or $\C S$ is necessarily semisimple; this is true if $S$ is either finite or abelian. 
 It is not true that every cancellative semigroup is a subsemigroup  of a group \cite{Mal}; we do not know if $\ell^{\,1}(S)$  or $\C S$  
is necessarily semisimple whenever this is the case. Let ${\mathbb S}_n$ be the free semigroup on $n$ generators. Then it is true that 
$\ell^{\,1}({\mathbb S}_n)$ is semsimple: indeed,   $J({\mathbb S}_n)   = {\mathcal Q}({\mathbb S}_n)  = \{0\}$  \cite[Theorem 2.3.14]{D}. 
 For some partial results on when  $\C S$ is semisimple for particular cancellative semigroups, see  \cite[Chapter 10, Corollary 5 and Lemma 8]{Ok}.  
For example, each ordered semigroup $S$ is cancellative and such that $\C S$ is semisimple.

   We obtain the following corollary of Proposition \ref{1.2a}.\s

\begin{proposition}\label{2.20}
Let $S$ be a semigroup with a subgroup $G$. Suppose that $f \in J(S)$ with $\supp f \subset G$.
 Then $f=0$.\quad\qedhere 
\end{proposition}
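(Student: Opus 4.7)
The plan is to reduce the statement to the known semisimplicity of group algebras via Proposition \ref{1.2a}. Concretely, I would show that $\ell^{\,1}(G)$ sits inside $\ell^{\,1}(S)$ as a closed subalgebra, deduce that any $f \in J(S)$ supported on $G$ lies in $J(\ell^{\,1}(G))$, and then invoke the fact, recalled in the discussion preceding this proposition, that $\ell^{\,1}(G)$ is semisimple for every group $G$.

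First I would identify $\ell^{\,1}(G)$ with the subspace $B := \{g \in \ell^{\,1}(S) : \supp g \subset G\}$ of $\ell^{\,1}(S)$. The equality $\lV g \rV = \sum_{s \in S} \lv g(s) \rv = \sum_{s \in G} \lv g(s) \rv$ shows that $B$ is isometrically isomorphic to $\ell^{\,1}(G)$ as a Banach space and that it is norm-closed in $\ell^{\,1}(S)$. To see it is a subalgebra, I would use the definition of the convolution product together with the fact that $G$ is closed under the semigroup operation: if $g_1,g_2 \in B$ and $t \in S \setminus G$, then $(g_1 \,\star\, g_2)(t) = \sum \{g_1(r) g_2(s) : rs = t\}$ vanishes, since any nonzero term would require $r,s \in G$, forcing $t = rs \in G$. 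Thus $B$ is a closed subalgebra of $\ell^{\,1}(S)$, and the restricted product is precisely the group algebra product on $\ell^{\,1}(G)$.

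Next, under the hypothesis $f \in J(S)$ with $\supp f \subset G$, I have $f \in J(\ell^{\,1}(S)) \cap B$. Proposition \ref{1.2a} then gives $f \in J(B) = J(\ell^{\,1}(G))$. Finally, since $G$ is a group, $J(\ell^{\,1}(G)) = \{0\}$ (as recalled in the paragraph citing \cite[Corollary 3.3.35]{D}), so $f = 0$.

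The argument is essentially a one-line application of the general subalgebra principle once one has the embedding; there is no serious obstacle. The only point needing verification is that $B$ really is a subalgebra, and this uses exactly the fact that $G$ is closed under multiplication (a subsemigroup would suffice for this step, with the subgroup hypothesis being used at the end to invoke semisimplicity of the group algebra).
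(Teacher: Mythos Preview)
Your proposal is correct and follows exactly the approach intended by the paper: the proposition is stated there as an immediate corollary of Proposition~\ref{1.2a}, and you have simply written out the details---identifying $\ell^{\,1}(G)$ as a closed subalgebra of $\ell^{\,1}(S)$, applying Proposition~\ref{1.2a}, and invoking the semisimplicity of $\ell^{\,1}(G)$.
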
\s
 
\begin{proposition}\label{2.21}
Let $V$ be a compact, right topological semigroup, and suppose that  $p,q \in K(V)$. Take $f \in J_0( V)$  with $\supp f \subset pK(V)q$. Then $f=0$.
\end{proposition}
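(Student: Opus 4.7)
The plan is to localize $f$ inside a subgroup of $V$ and then apply Rickart's theorem. By Theorem \ref{str}(iii), $G := pK(V)q$ is a subgroup of $V$; let $e \in G$ be its identity. Then $e^2 = e$, $e \in K(V)$, and $ege = g$ for every $g \in G$. Since $K(V)$ is an ideal in $V$, we have $eVe = eK(V)e$; applying Theorem \ref{str}(iii) once more with $p = q = e$ shows that $H := eVe$ is itself a subgroup of $V$. As $G \subset H$, we have $\supp f \subset H$.

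Set $A := \C V$. The element $\delta_e$ is an idempotent of $A$, and $\delta_e \,\star\, \delta_v \,\star\, \delta_e = \delta_{eve}$ for every $v \in V$, so $\delta_e \,\star\, A \,\star\, \delta_e = \C H$, with $\delta_e$ as the identity of this corner subalgebra. Since $\supp f \subset H = eHe$, we have $\delta_e \,\star\, f \,\star\, \delta_e = f$, so $f \in \C H$.

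The key step is the corner inclusion $\delta_e \,\star\, J(A) \,\star\, \delta_e \subseteq J(\C H)$. I would prove this by noting that for any $a = \delta_e \,\star\, a \,\star\, \delta_e \in A$ and any nonzero $z \in \C$, the orthogonal idempotents $\delta_e$ and $e_A - \delta_e$ in $A^\sharp$ give the splitting
\[
z e_A - a = (z\delta_e - a) + z(e_A - \delta_e),
\]
whose summands annihilate each other on both sides; it then follows that $ze_A - a$ is invertible in $A^\sharp$ iff $z\delta_e - a$ is invertible in $\C H$, giving $\mathcal{Q}(A) \cap \C H = \mathcal{Q}(\C H)$. For $f \in J(A) \cap \C H$ and any $b \in \C H$, we have $b \,\star\, f \in J(A) \cap \C H \subset \mathcal{Q}(A) \cap \C H = \mathcal{Q}(\C H)$, so Theorem \ref{1.1} yields $f \in J(\C H) = J_0(H)$. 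Since $H$ is a group, Rickart's theorem (cited in the introduction) gives $J_0(H) = \{0\}$, and therefore $f = 0$.

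The main obstacle is the corner calculation in the non-unital algebra $A$: one must track which inversions take place in $A^\sharp$ versus in $\C H$ and verify that $\mathcal{Q}(A) \cap \C H$ truly coincides with $\mathcal{Q}(\C H)$. After that, the argument is bookkeeping together with the classical theorem of Rickart.
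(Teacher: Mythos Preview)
Your proof is correct and follows essentially the same strategy as the paper: identify the idempotent $e$ (the paper calls it $r$) whose corner $eVe$ is a group containing $\supp f$, show that $f$ lies in the radical of $\C(eVe)$, and conclude via Rickart's theorem. The only technical difference is that the paper proves the corner inclusion $J_0(V)\cap \C H \subset J_0(H)$ using the quasi-inverse characterization of the radical (given $g\in\C G$, take $h$ with $g\star f + h = h\star g\star f$ and compress $h$ to $\delta_r\star h\star\delta_r$), whereas you use the quasi-nilpotent characterization from Theorem~\ref{1.1} together with the standard spectral identity $\mathcal{Q}(A)\cap \C H = \mathcal{Q}(\C H)$ for a corner with identity $\delta_e$; both routes are equally valid and equally short.
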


\begin{proof} Set $G= pK(V)q$, so that $G$ is a group.  By Theorem \ref{str}(iii), there exists $r \in E(V) \cap G$ such that $rp =p$ and $qr=q$  and such that
$rK(V)r =rVr$ is a group. 

Take $g \in \C\, G\subset \C S$. Then there exists $h\in \C S$ with $g\,\star\,f + h = h\,\star\,g\,\star\,f$. 
We have 
\[
g\,\star\,f + \delta_r\,\star\,h\,\star\,\delta_r = \delta_r\,\star\,h\,\star\,\delta_r\,\star\,g\,\star\,f 
\] 
because $\delta_r\,\star\,g = g$ and $f \,\star\,\delta_r  =f$, and $\supp(\delta_r\,\star\,h\,\star\,\delta_r) \subset rVr\subset G$, 
so  that  $\delta_r\,\star\,h\,\star\,\delta_r \in \C\, G$. Thus $f\in J_0(G) =\{0\}$. \end{proof}\s
 
Let $\F_2$ be the free group on two generators.  It is shown  in \cite[Lemma 7.3]{DLS1}  that there are nilpotent elements 
of every index in $\ell^{\,1}(\F_2)$ and that there are quasi-nilpotent elements that are not nilpotent.  Thus 
$\{0\}= J(\F_2) \subsetneq   {\mathcal N}(\F_2)   \subsetneq  {\mathcal Q}(\F_2)$.\medskip

\section{The semigroup $(S^*, \Box)$}

\noindent  The Stone--\v{C}ech compact\-ification 
of a discrete  topological space $S$ is denoted by $\beta S$;  we regard $S$ as a subset of  $\beta S$, and set $S^*=\beta S \setminus S$. More generally,
 we set $A^* = \overline{A}\cap S^*$ for a subset $A$ of $S$, where $\overline{A}$ is the closure of $A$ in $\beta S$.

Now, throughout this section, we take $S$ to be  a semigroup;   the particular example that we have in mind is $S = (\N, +)$. 
It is shown in many places, including \cite{DLS1, HS} 
(from different points of view), that, in the case where $S$ is a semigroup,  there is a unique binary operation $\Box$ on $\beta S$ such that $ (\beta S, \Box)$
 is a semigroup containing $S$ as a subsemigroup and such that $(\beta S, \Box)$ is a compact, right topological semigroup. \s
 
 \begin{definition}\label{3.1}
Let $S$ be a semigroup. Then the  semigroup  $(\beta S, \Box)$  is  the 
{\it Stone--\v{C}ech semigroup com\-pactification} of  $S$.
\end{definition}\s

In the  general case,  where the product in $S$ is denoted by juxtaposition, we shall usually denote the operation $\Box$ in $\beta S$ by juxtaposition and write just
 $\beta S$ for $(\beta S, \Box)$;   the corresponding  product in $\ell^{\,1}(\beta S)$ is denoted by $\,\star\,$.
In the special case where $S$ is abelian (and especially where $S= (\N,+)$), we shall sometimes write $(\beta S, +)$ for the semigroup $(\beta S, \Box)$, 
as in \cite{HS}, where we recall that, in general,  $x+y \neq y+x$ for $x,y \in \beta S$.

 There is also  a unique binary operation $\Diamond $  on $\beta S$ such that $ (\beta S, \Diamond)$
 is a semigroup containing $S$ as a subsemigroup and such that $(\beta S, \Diamond)$ is a compact, left topological semigroup. In the case where the semigroup 
$S$ is abelian,  the two semigroups  $(S^*,\Box)$ and $(S^*, \Diamond)$ have the same minimal ideal and $\ell^{\,1}(\beta S, \Diamond)$
 is just the opposite algebra to $\ell^{\,1}(\beta S, \Box)$, and so these two algebras have the same Jacobson radical.  In the case where $G$ is a group,
 the map $s\mapsto s^{-1}$ on $G$ extends to a continuous homeomorphism $\eta : \beta G \to \beta G$ such that 
$\eta(x \bb y) = \eta(y)\diamond \eta(x)\,\;(x,y \in \beta G)$. 
It follows easily that $(\beta G, \Box)$ is semisimple if and only if $(\beta G, \Diamond)$ is semisimple; we do not know if this is true when we replace
 $G$ by a (cancellative) semigroup. \s

Let $S$ be a semigroup.  We note that the map $L_s$  is continuous on  $ (\beta S, \Box)$ for each $s \in S$, and that, for many  semigroups $S$,
 including all weakly cancellative semigroups, the map  $L_s$ on $ \beta S $ is continuous only if $s \in S$ \cite[Theorem 12.20]{DLS1}.

Let $S$ be a semigroup,  and take  $u\in   \beta S$. Recall that the left ideal $(\beta S)u$ is closed in $\beta S$ and that $(\beta S)u = \overline{S u}$; 
we shall use this fact several times.
 The set  $S^*$ is an ideal in $\beta S$ if and only if $S$ is weakly cancellative \cite[Theorem 6.16(ii)]{DLS1}, and then 
  $S^*= (S^*, \Box)$ is also a compact, right topological semigroup; further,    $\ell^{\,1}(S^*)$ is a closed ideal in $(\ell^{\,1}(\beta S),\,\star\,)$, and hence 
$\ell^{\,1}(\beta S) = \ell^{\,1}(S)\ltimes \ell^{\,1}(S^*)$ as a semi-direct product. The   structure theorem  applies to both $\beta S $ and 
$S^*$; in particular, $\beta S$ and $S^*$  each  have a (unique) minimum ideal.  In the case where  $S$ is  weakly cancellative, $K(S^*) = K(\beta S)$.

\s
 
\begin{proposition} \label{3.2a}
Let $S$ be a weakly cancellative semigroup such that $\ell^{\,1}(S)$ is semi\-simple. 
 Then $J(\beta S) =J(S^*)$.
\end{proposition}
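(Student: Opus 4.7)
The plan is to combine the semi-direct product decomposition $\ell^{\,1}(\beta S) = \ell^{\,1}(S)\ltimes \ell^{\,1}(S^*)$ (available because $S$ is weakly cancellative) with the two general statements already recorded in Proposition \ref{1.2}. Nothing deeper than this should be required; the statement is essentially a structural corollary, so the plan is to identify the right ideal and the right quotient.

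First I would note that, because $\ell^{\,1}(S^*)$ is a closed ideal in $\ell^{\,1}(\beta S)$, Proposition \ref{1.2}(i) (applied to the Banach algebra $\ell^{\,1}(\beta S)$ and the ideal $\ell^{\,1}(S^*)$) gives
\[
J(S^*) \;=\; J(\beta S)\cap \ell^{\,1}(S^*)\,.
\]
This immediately yields the inclusion $J(S^*)\subset J(\beta S)$, and reduces the theorem to showing that $J(\beta S)\subset \ell^{\,1}(S^*)$.

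Next I would identify the quotient. The semi-direct product structure means that, as a Banach space, every $f\in\ell^{\,1}(\beta S)$ splits uniquely as $f = f|_S+f|_{S^*}$, and the map $\pi\colon f\mapsto f|_S$ is a continuous algebra homomorphism from $\ell^{\,1}(\beta S)$ onto $\ell^{\,1}(S)$ with kernel $\ell^{\,1}(S^*)$; this is what the notation $\ltimes$ encodes. Consequently $\ell^{\,1}(\beta S)/\ell^{\,1}(S^*)\cong \ell^{\,1}(S)$ as Banach algebras. By hypothesis $\ell^{\,1}(S)$ is semisimple, so Proposition \ref{1.2}(iii) forces $J(\beta S)\subset \ell^{\,1}(S^*)$.

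Combining the two inclusions gives $J(\beta S) = J(\beta S)\cap \ell^{\,1}(S^*) = J(S^*)$. The only point requiring a moment's care — and really the only potential obstacle — is verifying that $\pi$ is genuinely multiplicative (so that the quotient algebra is what one expects); but this is exactly the content of the assertion that $\ell^{\,1}(S^*)$ is an ideal making $\ell^{\,1}(\beta S)$ a semi-direct product, which is the previously cited \cite[Theorem 6.16(ii)]{DLS1}. No computation with specific elements of $\beta S$ or with the spectral radius formula (\ref{(2.1)}) should be needed.
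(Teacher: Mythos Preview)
Your proof is correct and follows essentially the same approach as the paper's own proof: apply Proposition~\ref{1.2}(i) to get $J(S^*)=J(\beta S)\cap\ell^{\,1}(S^*)$, and then Proposition~\ref{1.2}(iii) together with $\ell^{\,1}(\beta S)/\ell^{\,1}(S^*)\cong\ell^{\,1}(S)$ semisimple to get $J(\beta S)\subset\ell^{\,1}(S^*)$. The paper's version is simply terser, omitting the explicit identification of the quotient that you spell out.
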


\begin{proof}  By Proposition \ref{1.2}(i), $J(\beta S) \cap \ell^{\,1}(S^*) = J(S^*)$. Since  $\ell^{\,1}(S)$ is semi\-simple,
   $J(\beta S)\subset \ell^{\,1}(S^*)$ by Proposition \ref{1.2}(iii).
\end{proof}\s

In particular, $J(\beta S) =J(S^*)$  whenever $S$ is either $\N$ or a group. \s
 
\begin{proposition} \label{3.2b}
The algebra  $\ell^{\,1}(\N^*)$ is semisimple if and only if $\ell^{\,1}(\Z^*)$ is semi\-simple.
\end{proposition}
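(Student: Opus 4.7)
The plan is to apply Proposition~\ref{1.2b} with $A = (\ell^{\,1}(\Z^*), \,\star\,)$ and $I = (\ell^{\,1}(\N^*), \,\star\,)$, so I need two ingredients: that $\ell^{\,1}(\N^*)$ is a closed left ideal of $\ell^{\,1}(\Z^*)$, and that some $u \in \ell^{\,1}(\N^*)$ satisfies $a \,\star\, u \neq 0$ and $u \,\star\, a \neq 0$ for every nonzero $a \in \ell^{\,1}(\Z^*)$.

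For the first ingredient I would verify that $\N^*$ is a left ideal of $(\Z^*, \bb)$ by an ultrafilter calculation. For $q \in \Z^*$ and $p \in \N^*$, using $A \in q \bb p$ iff $\{s \in \Z : A - s \in p\} \in q$ with $A = \N$, I note that $\N - s = \{t \in \Z : t + s \in \N\}$ is cofinite in $\N$ for every $s \in \Z$ and therefore lies in the free ultrafilter $p$; hence $\{s : \N - s \in p\} = \Z \in q$, so $\N \in q \bb p$, i.e.\ $q \bb p \in \N^*$.

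For the second ingredient the natural candidate is $u = \delta_p$ for a well-chosen $p \in \N^*$. The right condition $a \,\star\, \delta_p \neq 0$ is guaranteed by choosing $p$ right cancellable in $\beta\Z$, since then $R_p$ is injective on $\beta\Z$ and $a \,\star\, \delta_p = \sum_q a(q)\delta_{q \bb p}$ has no collisions; such $p$ exist densely in $\N^* \subset \Z^*$ by standard results on Stone--\v{C}ech compactifications of countable cancellative semigroups.

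The main obstacle is the left condition $\delta_p \,\star\, a \neq 0$, which demands some form of left cancellability of $p$; left multiplication by $p \in \beta\Z \setminus \Z$ is typically neither continuous nor injective, so securing both conditions with a single $u$ is delicate. A plausible workaround is to prove the two implications separately. For ``$\ell^{\,1}(\N^*)$ semisimple $\Rightarrow \ell^{\,1}(\Z^*)$ semisimple'', the symmetric calculation shows $(-\N)^*$ is also a left ideal of $\Z^*$, and the homeomorphism $\eta \colon \beta\Z \to \beta\Z$ extending $s \mapsto -s$ is a $\bb$-isomorphism (since $\Z$ is abelian) sending $\N^*$ to $(-\N)^*$, so $\ell^{\,1}((-\N)^*)$ is also semisimple; then for $f \in J(\ell^{\,1}(\Z^*))$ and any $g = g_+ + g_- \in \ell^{\,1}(\N^*) \oplus \ell^{\,1}((-\N)^*)$, Proposition~\ref{1.2a} gives $f \,\star\, g_+ \in J(\ell^{\,1}(\Z^*)) \cap \ell^{\,1}(\N^*) \subset J(\ell^{\,1}(\N^*)) = 0$ and similarly $f \,\star\, g_- = 0$, whence $f \,\star\, g = 0$ for every $g$, and taking $g = \delta_p$ with $p$ right cancellable forces $f = 0$. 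The reverse implication is the genuinely harder direction and will likely require either a two-sidedly non-degenerate $u$ or a careful spectral-radius estimate for products $b \,\star\, f$ with $b \in \ell^{\,1}((-\N)^*)$ and $f \in J(\ell^{\,1}(\N^*))$.
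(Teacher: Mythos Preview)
Your overall plan---apply Proposition~\ref{1.2b} with $A=\ell^{\,1}(\Z^*)$ and $I=\ell^{\,1}(\N^*)$---is exactly the paper's proof, and your verification that $\N^*$ is a left ideal in $\Z^*$ is correct.  The gap is your worry about left cancellability.  You treat the existence of a two-sidedly cancellable element of $\N^*$ (in $\beta\Z$) as ``delicate'' and resort to a workaround; in fact this is a standard result: by \cite[Theorem~8.34]{HS} the set of cancellable elements of $\beta\Z$ contains a dense open subset of $\Z^*$, so in particular there is $x\in\N^*$ that is cancellable (on both sides) in $\Z^*$.  Taking $u=\delta_x$ then gives $a\,\star\,u\neq 0$ \emph{and} $u\,\star\,a\neq 0$ for every nonzero $a\in\ell^{\,1}(\Z^*)$, and Proposition~\ref{1.2b} immediately yields both implications.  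So the ``genuinely harder direction'' is not harder at all once you invoke the right citation.

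Your detour for the implication ``$\ell^{\,1}(\N^*)$ semisimple $\Rightarrow$ $\ell^{\,1}(\Z^*)$ semisimple'' is correct and mildly interesting: the map $s\mapsto -s$ on $\Z$ is a group homomorphism (since $\Z$ is abelian), hence extends to a $\Box$-automorphism of $\beta\Z$ carrying $\N^*$ onto $(-\N)^*$, and the decomposition $\Z^*=\N^*\sqcup(-\N)^*$ into two left ideals lets you conclude $f\,\star\,g=0$ for all $g$ via Proposition~\ref{1.2a}.  But this is unnecessary once the cancellable element is in hand.
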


\begin{proof} By \cite[Exercise 4.3.5]{HS}, $\N^*$ is a left ideal in $\Z^*$, and so  $\ell^{\,1}(\N^*)$ is a closed left ideal in $\ell^{\,1}(\Z^*)$. 
 By \cite[Theorem 8.34]{HS}, there is an element $x \in \N^*$ such that $x$ is cancellable in $\Z^*$, and so $u=\delta_x \in \ell^{\,1}(\N^*)$ has the property
 that    $au \neq 0$ and $ua\neq 0$ whenever $a \in \ell^{\,1}(\Z^*)\setminus \{0\}$.  Thus the result follows from Proposition \ref{1.2b}.
\end{proof}\s

\begin{example} For $m,n \in\N$, define $m\vee n =\max\{m,n\}$, and set $S = (\N,\vee)$.  Then $S$ is a countable, weakly cancellative, abelian semigroup, 
and  $\ell^{\,1}(S)$ is  semisimple because $S$ is separating (see also \cite[Example 4.9]{DLS1}), and so  $J(\beta S) =J(S^*)$.  Take $u,v \in S^*$,
 then $u\bb v= v$, and so $(S^*, \Box)$ is a right 
zero semigroup. It is easy to see \cite[Example 7.32]{DLS1}, that
 $$
J(\beta S, \Box) =\left\{f \in \ell^{\,1}(S^*): \sum_{u\in S^*} f(u)=0\right\}\,,
$$
and so $\ell^{\,1}(S^*)$ is not semisimple. \quad\qedhere
\end{example}\s

The following result shows immediately that $\{0\}  \subsetneq   {\mathcal N}(\N^*, +)   \subsetneq  {\mathcal Q}(\N^*, +)$.  
The theorem is due to Hindman and Pym \cite{HP}; see \cite[$\S7.3$]{HS} for more general results.\s
 
\begin{proposition}\label{4.4} 
The semigroup  $( \N^*, +)$ contains many isomorphic  copies  of $\,\F_2$ as a subgroup of $K(\N^*)$, and $\ell^{\,1}(\N^*, +)$
contains many   isometric and isomorphic copies of $\ell^{\,1}(\F_2)$ as a closed subalgebra.\quad\qedhere 
\end{proposition}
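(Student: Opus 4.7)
The plan is to split the statement into two parts: (a) produce an embedding of $\F_2$ as a subgroup of $K(\N^*)$, and (b) show that any such embedding induces an isometric algebra embedding $\ell^{\,1}(\F_2) \hookrightarrow \ell^{\,1}(\N^*,+)$. Step (b) is essentially formal, while (a) is the content of the Hindman--Pym theorem, which I would quote from \cite{HP} (or its treatment in \cite[$\S7.3$]{HS}) rather than reprove from scratch.

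For step (a), the starting point is the structure theorem (Theorem \ref{str}) applied to the compact right topological semigroup $(\beta\N,+)$. Since $\N$ is weakly cancellative, $(\N^*,+)$ is itself a compact right topological semigroup, and $K(\N^*)=K(\beta\N)$. By Theorem \ref{str}(ii), $K(\N^*)$ is the disjoint union of its maximal groups $p+\N^*+p$, one for each minimal idempotent $p\in E(K(\N^*))$, and these groups are pairwise isomorphic. The Hindman--Pym construction produces, inside such a maximal group, two elements whose generated subgroup is free of rank $2$; the key ingredients are the abundance of minimal idempotents in $\beta\N$ and careful control of FP-sets associated with sparse sequences in $\N$, which guarantees that no nontrivial reduced word in the chosen generators can equal the identity of the group. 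Because $|E(K(\N^*))|=2^{\mathfrak c}$ and one can vary both the ambient maximal group and the choice of generators within it, one obtains $2^{\mathfrak c}$ many such embeddings, which is the ``many'' in the statement.

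For step (b), fix a group embedding $\iota\colon\F_2\hookrightarrow H\subset K(\N^*)$ into a maximal group $H$, and define
\[
T\colon \ell^{\,1}(\F_2)\to \ell^{\,1}(\N^*,+),\qquad T\Bigl(\sum_{g\in\F_2} c_g\delta_g\Bigr)=\sum_{g\in\F_2} c_g\,\delta_{\iota(g)}\,.
\]
Since $\iota$ is injective, the elements $\{\delta_{\iota(g)}:g\in\F_2\}$ are distinct basis vectors of $\ell^{\,1}(\N^*)$, so $\lV T(f)\rV=\lV f\rV$ for all $f$, proving $T$ is an isometry. The fact that $\iota$ is a group homomorphism into $(H,+)$, together with the fact that the ambient operation $+$ on $\N^*$ restricted to the subgroup $H$ coincides with the group operation of $H$, gives
\[
T(\delta_g)\,\star\,T(\delta_h)=\delta_{\iota(g)+\iota(h)}=\delta_{\iota(gh)}=T(\delta_g\,\star\,\delta_h)\,,
\]
and extending by bilinearity and continuity yields an isometric algebra embedding. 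The image $T(\ell^{\,1}(\F_2))$ is automatically closed since $T$ is an isometry on a Banach space.

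The main obstacle is entirely in step (a): the bare existence of a copy of $\F_2$ inside a maximal group of $K(\N^*)$ is a nontrivial result, because elements of $\N^*$ are notoriously hard to describe explicitly, and one must argue that a chosen pair of generators satisfies no nontrivial relation in $(\N^*,+)$. This is precisely what \cite{HP} establishes, and step (b) is little more than the observation that a subgroup embedding always lifts to an isometric $\ell^{\,1}$-embedding.
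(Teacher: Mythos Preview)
Your proposal is correct and takes essentially the same approach as the paper: both rely on citing Hindman--Pym \cite{HP} (and \cite[\S7.3]{HS}) for the existence of copies of $\F_2$ inside maximal groups of $K(\N^*)$, which is the only substantive step. You add the explicit verification in step~(b) that an injective semigroup homomorphism lifts to an isometric algebra embedding of the $\ell^{\,1}$-algebras; the paper leaves this implicit and simply records the proposition as a consequence of the cited result.
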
\s

In this paper, we shall seek to determine the space $J(S^*)$ for a semigroup $S$, concentrating on the case where  $S$ is cancellative,  countable, and abelian, 
and more generally for a  countable semigroup $S$ that can be embedded in a group.  
We  shall see that it seems to be difficult to determine even whether $J(\N^*)$ is equal to $\{0\}$, and hence that $\ell^{\,1}(\N^*, +)$ is a semi\-simple Banach 
algebra: we shall show that this question is closely related  to  well-known open questions in the theory of $\beta \N$.  
For a general cancellative,  countable,   abelian semigroup $S$, we should like to determine $J(S^*)$  if it should transpire that $\ell^{\,1}(S^*)$ is not semisimple.
\medskip

\section{Results about $\beta S$}\noindent In this section, we establish some results about the Stone--\v{C}ech semigroup compact\-ification 
of a semigroup which will usually be cancellative and countable. 

We shall use a version of \cite[Theorem 3.40]{HS} several times; for convenience, we re-state this result here.\s

\begin{lemma}\label{6.0}  Let $S$ be a non-empty set,  and let $P$ and $Q$ be countable subsets of $\beta S$.  Suppose that 
$\overline{P}\cap \overline{Q} \not= \emptyset$.
Then either  $ {P}\cap \overline{Q} \not= \emptyset$ or $\overline{P}\cap {Q} \not= \emptyset$.\quad\qedhere 
\end{lemma}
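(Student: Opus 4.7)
The plan is to prove the contrapositive: assuming both $P \cap \overline{Q} = \emptyset$ and $\overline{P} \cap Q = \emptyset$, I will construct disjoint subsets $A, B$ of $S$ with $P \subset \overline{A}$ and $Q \subset \overline{B}$. Since $S$ is discrete, the sets $\{\overline{C} : C\subset S\}$ form a basis of clopen sets for $\beta S$, and they satisfy $\overline{A}\cap\overline{B}=\overline{A\cap B}$. Thus disjointness of $A$ and $B$ forces $\overline{A}\cap\overline{B}=\emptyset$, yielding $\overline{P}\cap\overline{Q}\subset \overline{A}\cap\overline{B}=\emptyset$, contrary to the hypothesis.

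Enumerate $P=\{p_n:n\in\N\}$ and $Q=\{q_n:n\in\N\}$, and construct $A_0,B_0,A_1,B_1,\ldots\subset S$ recursively, maintaining at each stage the invariants $p_n\in\overline{A_n}$, $q_n\in\overline{B_n}$, $\overline{A_n}\cap\overline{Q}=\emptyset$, and $\overline{B_n}\cap\overline{P}=\emptyset$. To carry out stage $n$, I first find $A_n$: by the first standing assumption $p_n\notin\overline{Q}$, and by the inductive hypothesis $\overline{B_k}\cap\overline{P}=\emptyset$ for $k<n$, so, since $p_n\in\overline{P}$, the point $p_n$ lies outside the closed set $\overline{Q}\cup\bigcup_{k<n}\overline{B_k}$. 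A basic clopen neighborhood of the form $\overline{A_n}$ (with $A_n\subset S$) can therefore be chosen inside its complement. Symmetrically, $q_n\notin\overline{P}$ by the second standing assumption and $q_n\notin\overline{A_k}$ for $k\leq n$ because $\overline{A_k}\cap\overline{Q}=\emptyset$, so I can select $B_n\subset S$ with $q_n\in\overline{B_n}\subset\beta S\setminus(\overline{P}\cup\bigcup_{k\leq n}\overline{A_k})$.

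Setting $A=\bigcup_n A_n$ and $B=\bigcup_n B_n$, the construction forces $A_n\cap B_k=\emptyset$ whenever $k<n$ and $B_n\cap A_k=\emptyset$ whenever $k\leq n$, so $A\cap B=\emptyset$; also $p_n\in\overline{A_n}\subset\overline{A}$ and $q_n\in\overline{B_n}\subset\overline{B}$, so $P\subset\overline{A}$ and $Q\subset\overline{B}$, completing the argument.

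The only real issue is verifying that the inductive step goes through, and the key point there is that at stage $n$ one only needs to avoid a \emph{finite} union of previously chosen clopen sets, each known to miss $\overline{P}$ or $\overline{Q}$; this is exactly where the countability of $P$ and $Q$ is used. No appeal to deeper structure of $\beta S$ is needed beyond the fact that $\overline{C}$ is clopen for each $C\subset S$ and that $\overline{A}\cap\overline{B}=\overline{A\cap B}$ for $A,B\subset S$.
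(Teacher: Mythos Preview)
Your argument is correct. The paper does not supply its own proof of this lemma; it simply quotes the result as \cite[Theorem 3.40]{HS}, so there is no in-paper argument to compare against. Your back-and-forth construction of disjoint $A,B\subset S$ separating $P$ from $Q$ is essentially the standard proof (and is the one given in Hindman--Strauss), relying only on the facts that the sets $\overline{C}$ for $C\subset S$ form a clopen base and satisfy $\overline{A}\cap\overline{B}=\overline{A\cap B}$. One minor remark: the invariants you list ($\overline{A_n}\cap\overline{Q}=\emptyset$ and $\overline{B_n}\cap\overline{P}=\emptyset$) are weaker than what your construction actually delivers, namely $\overline{A_n}\cap\overline{B_k}=\emptyset$ for $k<n$ and $\overline{B_n}\cap\overline{A_k}=\emptyset$ for $k\le n$; it is these stronger disjointness conditions that you use at the end to conclude $A\cap B=\emptyset$, so you may wish to record them explicitly among the invariants.
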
\s

We recall from \cite{DLS1} and \cite{HS} the definition of a specific subsemigroup ${\mathbb H}$ of $(\N^*, +)$.

Let $n \in \N$. Then   $\Z_n = \{0, 1, \dots,n-1\}$  is the natural group with respect to addition modulo
 $n$, so that there is a quotient map $q_n : \Z\to \Z_n$ which is a group homomorphism; the map $q_n$ extends to a semigroup homomorphism $q_n : \beta \Z \to \Z_n$.
  The subset  ${\mathbb H}$ of $\N^*$ is defined to be 
\[
{\mathbb H}= \{ x\in\beta \N : q_{2^n}(x) =0\,\;(n\in\N)\}\,.
\] 
We note that ${\mathbb H}$ is a subsemigroup of $(\N^*,+)$ and a   $G_\delta$-set in $\N^*$ and that ${\mathbb H}\supset E(\N^*)$, so that
  ${\mathbb H}\cap K(\N^*) \neq \emptyset$.\s

  We shall require the following notion and theorem from \cite{HS}. 
  
Let $G$ be a group, and suppose that there is a monomorphism  $\gamma : G\to C$, where $C$ is a compact topological group. We identify $G$ as
 a subset of $C$; we may suppose that $G$  is dense in $C$.  In the case where $G$ is countable, we may also suppose that  $C$ is metrizable. 
 We note that every abelian group can be embedded in a compact group which is a product of copies of the circle group; also,  $\F_2$ 
can be algebraically embedded in a compact topological group \cite[Proposition 2.24]{HS}.
There is an extension of $\gamma$ to a continuous epimorphism $\gamma : (\beta G, \Box) \to C$.  We define $V$ \label{V} to be the kernel of $\gamma$, and,
 for $x,y \in \beta G$, we set $x\sim y$ if $\gamma(x) = \gamma(y)$.
 
 The following theorem follows from \cite[Theorem 7.28]{HS}.\s
  
  \begin{theorem}\label{4.3}
Let $G$ be a countably infinite group, and let $V$ be as above.  Then $V$ contains
 $E(G^*)$, and $V$ is topologically isomorphic to ${\mathbb H}$.\quad\qedhere
\end{theorem}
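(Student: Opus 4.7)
The plan is to establish the two assertions separately. For the first, that $E(G^*)\subset V$, I would use only the homomorphism property of $\gamma$. Since $\gamma:(\beta G,\Box)\to C$ is a continuous semigroup homomorphism, any idempotent $p\in\beta G$ satisfies $\gamma(p)=\gamma(p)\gamma(p)$, i.e.\ $\gamma(p)$ is idempotent in the group $C$. But a group has only one idempotent, namely the identity $e_C$, and $V=\gamma^{-1}(e_C)$, so $p\in V$. In particular, $E(G^*)\subset E(\beta G)\subset V$. This is a short, essentially formal argument.

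For the second assertion, that $V$ is topologically isomorphic to $\mathbb{H}$, the plan is to invoke Theorem 7.28 of \cite{HS} directly. The hypotheses of that theorem are built into the setup preceding the statement: $G$ is a countably infinite group; $\gamma$ is an injective homomorphism with $\gamma(G)$ dense in $C$; and, since $G$ is countable, one has arranged that $C$ be metrizable. Under these conditions the kernel of the extended map $\beta G\to C$ has the same structure, topologically and algebraically, as the prototype kernel $\mathbb{H}\subset\beta\N$, obtained when one embeds $(\N,+)$ diagonally into $\prod_n \Z_{2^n}$ and takes the closure.

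The bookkeeping step that I would want to handle with care is the translation between our setup and the precise hypothesis list of \cite[Theorem 7.28]{HS}. The prototype $\mathbb{H}$ is built via a nested chain of clopen cofinite-type subsets $q_{2^n}^{-1}(0)$, which gives $\mathbb{H}$ as a $G_\delta$ with a canonical decreasing neighborhood-base structure. In the general case, metrizability of $C$ provides a decreasing base $\{U_n\}$ of identity neighborhoods with $\bigcap U_n=\{e_C\}$, so $V=\bigcap_n \gamma^{-1}(U_n)$ is a $G_\delta$ subset of $\beta G$; the nontrivial content of \cite[Theorem 7.28]{HS} is that the resulting nested system in $\beta G$ is combinatorially rich enough to match the one defining $\mathbb{H}$, yielding both the homeomorphism and the algebraic isomorphism.

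The main obstacle, then, is not the idempotent containment (which is immediate) but checking that our situation lies cleanly within the hypothesis scope of \cite[Theorem 7.28]{HS}; once verified, the topological semigroup isomorphism $V\cong\mathbb{H}$ is simply read off. No further calculation is required beyond citing that result.
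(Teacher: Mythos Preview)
Your proposal is correct and matches the paper's approach exactly: the paper provides no proof beyond the line ``The following theorem follows from \cite[Theorem 7.28]{HS}'', so your plan to deduce $E(G^*)\subset V$ from the homomorphism property and then cite that result for the topological isomorphism is precisely what is intended. If anything, you supply more detail than the paper does.
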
\s 
  
 Thus, in the above case, there is a map $\theta : \ell^{\,1}(V) \to   \ell^{\,1}({\mathbb H})$ such that $\theta$ is an isometry and an algebra 
isomorphism; in particular, $J(V)$ can be identified with $J({\mathbb H})$.\s

\begin{lemma}\label{6.7}
Let $G$ be a countably infinite group, and let $E$ be an equivalence class determined by the relation $\sim$.  Then there is a cancellable element
 $u\in\beta G$ such that $uE \subset  V$.
\end{lemma}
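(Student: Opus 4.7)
Write $E = \gamma^{-1}(c)$ for some $c \in C$. Since $\gamma \colon (\beta G, \Box) \to C$ is a continuous homomorphism and $V = \gamma^{-1}(e_C)$, the requirement $uE \subset V$ becomes $\gamma(u)\gamma(x) = e_C$ for every $x \in E$, i.e., $\gamma(u) = c^{-1}$. It is therefore enough to exhibit a cancellable element in the fiber $\gamma^{-1}(c^{-1}) \subset \beta G$.

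After replacing $C$ by $\overline{\gamma(G)}$ we may assume that $G$ is dense in $C$, and then, since $G$ is countable, $C$ is metrizable. Choose an injective sequence $(t_n)$ in $G$ with $\gamma(t_n) \to c^{-1}$. Enumerate $G \setminus \{e\} = \{g_n : n \in \N\}$ and thin $(t_n)$ inductively by requiring at stage $n$ that $t_n \notin \{g_k^{\pm 1} t_i : k,i < n\}$; this is a finite avoidance condition, whereas infinitely many $t \in G$ have $\gamma(t)$ in any prescribed neighbourhood of $c^{-1}$. Setting $A = \{t_n : n \in \N\}$, the sets $hA \cap A$ and $Ah \cap A$ are then finite for every $h \in G \setminus \{e\}$. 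Let $u \in \beta G$ be any cluster point of $(t_n)$; then $u \in A^*$ and $\gamma(u) = c^{-1}$ by continuity of $\gamma$.

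The heart of the matter is to verify that $u$ is cancellable in $\beta G$. For right cancellation, suppose $v_1 \bb u = v_2 \bb u$ with $v_1 \neq v_2$; pick disjoint $B_1, B_2 \subset G$ with $v_i \in \overline{B_i}$. Using that $R_u$ is continuous on $\beta G$ and that $L_s$ is continuous for each $s \in G$, one finds $v_i \bb u \in \overline{B_i \bb u} \subset \overline{B_i A}$, so $\overline{B_1 A} \cap \overline{B_2 A} \neq \emptyset$. Applying Lemma~\ref{6.0} to the countable subsets $B_1 A$ and $B_2 A$ of $G$ produces $s_1 \in B_1$, $s_2 \in B_2$ and $t_i, t_j \in A$ with $s_1 t_i = s_2 t_j$, i.e.\ $t_j \in (s_2^{-1} s_1) A \cap A$. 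The dual statement $u \bb v_1 = u \bb v_2 \Rightarrow v_1 = v_2$ is treated symmetrically, using that $R_{v_i}$ is continuous (because $\beta G$ is right topological) and the finiteness of $Ah \cap A$.

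The main obstacle is bridging the gap between \emph{finiteness} of $hA \cap A$ and the \emph{emptiness} one needs for an outright contradiction. The intended fix is to pair the Lemma~\ref{6.0} step with a further shrinking of $B_1, B_2$ within the ultrafilters $v_1, v_2$ so as to avoid the finitely many troublesome elements of $hA \cap A$ as $h$ runs through the finitely many relevant values $s_2^{-1} s_1$ produced at each round; iterating and extracting a limit yields the required contradiction. Alternatively, one may refine the thinning of $(t_n)$ to a strongly discrete sequence in the sense of \cite[Chapter~8]{HS}, for which the corresponding intersections are actually empty, so that Lemma~\ref{6.0} gives an immediate contradiction.
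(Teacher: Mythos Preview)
Your setup is exactly the paper's: pick an infinite $A=\{t_n\}\subset G$ with $\gamma(t_n)\to c^{-1}$, so that every $u\in A^*$ satisfies $\gamma(u)=c^{-1}$ and hence $uE\subset V$. The paper then finishes in one line by invoking \cite[Theorem 8.34]{HS}: the cancellable elements of $\beta G$ contain a dense open subset of $G^*$, so the clopen set $A^*$ already contains a cancellable $u$. You instead try to manufacture cancellability by hand, and this is where the argument is incomplete.

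For right cancellation your computation is correct up to the point where Lemma~\ref{6.0} yields $s_1t_i=s_2t_j$ with $s_1\in B_1$, $s_2\in B_2$; but, as you acknowledge, your thinning only makes $hA\cap A$ \emph{finite}, not empty, so no contradiction follows. Your first proposed fix (``shrink $B_1,B_2$ and iterate, extract a limit'') is essentially the content of the proof of \cite[Theorem~8.34]{HS} and is not a triviality one can wave at; your second fix (pass to a ``strongly discrete'' sequence with empty intersections) cannot work as stated, since $hA\cap A=\emptyset$ for \emph{all} $h\ne e$ is impossible whenever $|A|\ge 2$ (take $h=t_2t_1^{-1}$). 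More seriously, the left-cancellation case is \emph{not} symmetric: from $u\,\Box\,v_1=u\,\Box\,v_2$ you get $u\,\Box\,v_i\in\overline{Av_i}$, but $Av_i$ is a countable subset of $\beta G$, not of $G$, so the step ``apply Lemma~\ref{6.0} and land back in $G$'' that drove the right-cancellation argument is unavailable; the asymmetry of $\Box$ really bites here.

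In short, the reduction to ``find a cancellable element in $A^*$'' is correct and matches the paper, but the cancellability itself is the substantive fact you are missing. The clean resolution is simply to cite \cite[Theorem 8.34]{HS}, as the paper does.
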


\begin{proof} We suppose that $G$ is embedded in  a compact topological group $C$, as above, and   take $c \in C$ such that $\gamma(x) = c$ for each $x \in E$. 
 Let $(U_n)$ be  a   sequence which is a  basis for the  family  of open neighbourhoods of the identity of $C$.  For each $n\in\N$, choose $s_n \in G$
 such that $s_nc \in U_n$, and set $S =\{s_n : n\in\N\}^*$,  so that $S$ is a clopen subset of $G^*$.  Clearly, for each $s\in S$, we have $sE \subset V$.
 By \cite[Theorem 8.34]{HS}, $S$  contains a cancellable element of $\beta G$. \end{proof}\s

Our  first results are modifications of Theorems 6.56 and 6.57 of \cite{HS}.      
 We adopt the following notation, which we shall maintain  throughout this section.   Let $S$ be a   countable  semigroup  that is a subsemigroup of a  group $G$; 
we may suppose that  $G$ is also countable.   For example, starting from a countable, cancellative, abelian  semigroup $S$, we can take $G$ to be 
the group of quotients of $S$.  We order  the group $G$ by a total ordering,  which we call `$<$'.

Let $(x_n : n\in\N)$  be a sequence in $S^*\setminus K(S^*)$, and fix $q \in K(S^*)$.  Then clearly we have 
$(\beta G)\,qx_n= \overline{Gqx_n} \subset K(S^*)\,\;(n\in \N)$, and so 
\[
\{x_1,\dots, x_n\} \cap \bigcup_{i=1}^n (\beta G)\,qx_i=\emptyset\quad (n\in\N)\,.
\]
Thus, for each $n\in\N$, we can choose a clopen subset $W_n$ in $\beta S$ such that
\begin{equation}
\{x_1,\dots, x_n\}\subset W_n\quad{\rm and}\quad W_n \cap \bigcup_{i=1}^n (\beta G)\,qx_i=\emptyset\,.
\end{equation}
For each $n\in\N$ and $r \in G$, we set
\begin{equation}
 U_{n,r} = \{u\in S^* : rux_i \not\in W_n\,\;(i=1,\dots,n)\}\,.
\end{equation}
We note that each $ U_{n,r}$ is a clopen subset of $S^*$   containing $q$.  
 
Since $G$ is countable, the intersection $\bigcap\{ U_{n,r}: n\in\N,\,r \in G\}$ is a $G_\delta$-set in $S^*$, and so it has a non-empty interior \cite[Theorem 3.36]{HS}.
 Thus we can find and fix a non-empty, clopen subset $U$ of  $S^*$ such that  $U \subset U_{n,r}$  for each $n\in\N$ and $r \in G$.
 By \cite[Theorem 8.34]{HS}, the set of cancellable elements of $\beta S$
 contains a dense, open subset of $S^*$, and so, by intersecting $U$ with such a set, we may suppose that every element of $U$ is cancellable in $\beta S$.

 In the special  case in which $S=\N$ and $G=\Z$,  we can suppose that we have chosen $q\in  {\mathbb H}$ (because ${\mathbb H}\cap K(\N^*) \neq \emptyset$) and that 
 $U \subset  {\mathbb H}$.  This follows from the fact that ${\mathbb H} \cap \bigcap\{ U_{n,r}: n\in\N,\,r \in \Z\}$ is a non-empty, 
$G_\delta$-set in $ \N^*$.
 
The set $U$ has the form $A^*$ for some infinite subset $A$ of $\N$; we write $A= \{a_1,a_2, \dots\}$.
By passing to a subset of  $A$, if necessary, we may suppose that
\begin{equation}\label{(6.3)}
ba_m \neq a_n\quad {\rm whenever}\quad m<n\;\;{\rm in}\; \;\N\quad {\rm and} \quad b<a_m \;\;{\rm in}\;\; G\,.
\end{equation}
For each $r\in G$, we set  $A_r = \{a  \in A : r< a\}$. Of course, $A \setminus A_r$ is finite, and so $A_r^* =A^*\,\;(r\in G)$.  \s

\begin{lemma}\label{6.1}
For each $u \in U$ and $m,n\in\N$,  we have $x_m\not\in (\beta G)\,ux_n$. 
\end{lemma}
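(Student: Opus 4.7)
The plan is to suppose, for contradiction, that $x_m \in (\beta G)\,u\,x_n$ for some $m,n \in \N$ and $u \in U$, and to force a contradiction at the index $k = \max(m,n)$ by pitting the neighborhood $W_k$ of $x_m$ against the defining property $U \subset U_{k,r}$.

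The first step is to pass from the assumed inclusion to a statement about a closure of a countable set. Since $(\beta G)\,u = \overline{Gu}$ and the right multiplication map $R_{x_n}$ is continuous on the right topological semigroup $\beta G$, we have
$$(\beta G)\,u\,x_n \,=\, R_{x_n}\bigl(\overline{Gu}\bigr) \,\subset\, \overline{R_{x_n}(Gu)} \,=\, \overline{Gu\,x_n},$$
so the contradiction hypothesis becomes $x_m \in \overline{Gu\,x_n}$, where $Gu\,x_n$ is a countable subset of $\beta G$.

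The second and final step uses the clopen set $W_k$. Although $W_k$ was initially chosen as a clopen subset of $\beta S$, the facts that $S\subset G$ and $G$ is discrete ensure that $\beta S$ is clopen in $\beta G$, so $W_k$ is open in $\beta G$ as well; moreover, $x_m \in \{x_1,\ldots,x_k\} \subset W_k$. Thus $W_k$ is an open neighborhood of $x_m$ in $\beta G$, which must meet the set $Gu\,x_n$, yielding some $r \in G$ with $rux_n \in W_k$. But $u \in U \subset U_{k,r}$, so by the definition of $U_{k,r}$ one has $r u x_i \notin W_k$ for $i=1,\ldots,k$; since $n \le k$, this gives $r u x_n \notin W_k$, the desired contradiction.

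The proof is essentially definition-chasing; the only point worth flagging is the passage of the neighborhood $W_k$ from $\beta S$ to $\beta G$, which is needed because the products $rux_n$ with $r \in G$ a priori live in $\beta G$ rather than in $\beta S$, and this follows from the standard fact that the closure of a clopen subset of a discrete space in its Stone--\v{C}ech compactification is clopen.
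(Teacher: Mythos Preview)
Your proof is correct and follows essentially the same route as the paper's: assume $x_m \in (\beta G)\,ux_n \subset \overline{Gux_n}$, choose $k \ge \max\{m,n\}$ so that $W_k$ is an open neighbourhood of $x_m$, extract $r \in G$ with $rux_n \in W_k$, and contradict $u \in U_{k,r}$. Your explicit justification that $W_k$ is open in $\beta G$ (and not merely in $\beta S$) is a detail the paper leaves implicit.
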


\begin{proof}  Take $u \in U$, and assume towards a contradiction that there exist $m,n\in\N$ such that $x_m\in (\beta G)\,ux_n= \overline{Gux_n}$.
Take $k \in \N$ with $k> \max\{m,n\}$.  Then $W_k$ is an open neighbourhood of $x_m$, and so there exists $y\in G$ such that $yux_n \in W_k$.  But this 
contradicts the fact that $u \in U_{k,y}$.  Thus $x_m\not\in (\beta G)\,ux_n$.
\end{proof}\s

\begin{lemma}\label{6.2}
For each  $u \in U$ and   $ n\in\N$,   the element $ux_n$ is right cancellable in $\beta G$.
\end{lemma}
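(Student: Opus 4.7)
The plan is to argue by contradiction. Suppose $ux_n$ is not right cancellable in $\beta G$, so that there exist distinct $y,z\in\beta G$ with $y\bb ux_n=z\bb ux_n$. Since $\beta G$ is a Stone space, I first choose disjoint subsets $A,B$ of $G$ with $y\in\overline{A}$ and $z\in\overline{B}$. As right multiplication by $ux_n$ is continuous on $\beta G$, one has $y\bb ux_n\in\overline{A\bb ux_n}$ and $z\bb ux_n\in\overline{B\bb ux_n}$, so the two countable subsets $A\bb ux_n$ and $B\bb ux_n$ of $\beta G$ have intersecting closures. Lemma~\ref{6.0} then yields, after possibly swapping $A$ and $B$, an element $a\in A$ with $a\bb ux_n\in\overline{B\bb ux_n}$.

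Because $a\in G$, left multiplication by $a^{-1}$ is a homeomorphism of $\beta G$, so applying it gives $ux_n\in\overline{C\bb ux_n}$, where $C:=a^{-1}B$; the disjointness of $A$ and $B$ together with $a\in A$ forces $e\notin C$. Continuity of $\rho_{ux_n}$ on the compact subset $\overline{C}$ then gives $\overline{C\bb ux_n}=\rho_{ux_n}(\overline{C})$, so there exists $w\in\overline{C}$ with $w\bb ux_n=ux_n$. Since $e\in G$ is an isolated point of $\beta G$ and $e\notin C$, we have $e\notin\overline{C}$, and hence $w\neq e$. This produces the key relation: a non-identity element of $\beta G$ stabilising $ux_n$ on the left.

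From here I would split on the value of $w\bb u$. If $w\bb u=u$, then $\rho_u(w)=\rho_u(e)$ with $w\neq e$, which contradicts the right cancellability of $u$ in $\beta G$; this last property is to be deduced from the cancellability of $u$ in $\beta S$ built into the construction of $U$, together with the fact that $G$ is the ambient group of $S$. If instead $w\bb u\neq u$, then $\rho_{x_n}(w\bb u)=\rho_{x_n}(u)$ with $w\bb u\neq u$, so re-running the Lemma~\ref{6.0} argument with $(w\bb u,u,x_n)$ in place of $(y,z,ux_n)$ produces $w'\in\beta G\setminus\{e\}$ with $w'\bb x_n=x_n$; combining this with the avoidance properties of the sets $U_{n,r}$ (so that $G\bb u\bb x_i$ misses $W_n$ for $i\le n$) and with condition (\ref{(6.3)}) on the enumeration $A=\{a_1,a_2,\ldots\}$ should force $x_n\in\overline{G\bb u\bb x_n}=(\beta G)\bb u\bb x_n$, in direct violation of Lemma~\ref{6.1}. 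The main obstacle is exactly this last contradiction in the sub-case $w\bb u\neq u$: translating the stabiliser relation $w'\bb x_n=x_n$ into membership of $x_n$ inside the closed left ideal generated by $ux_n$ requires delicate use of the structural control on $U$, and may well proceed through a further inductive application of Lemma~\ref{6.0} rather than a one-shot argument.
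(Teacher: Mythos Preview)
Your opening moves are fine, and you correctly manufacture a nontrivial left stabiliser $w\neq e$ of $ux_n$. But the sub-case $w\bb u\neq u$ has a real gap, and it is not merely a matter of iterating Lemma~\ref{6.0} once more. After your second pass you obtain $w'\neq e$ with $w'\bb x_n=x_n$; however, $w'=p^{-1}q''$ where $p\in P'\subset G$ and $q''\in\overline{Q'}$, and nothing ties $q''$ to $u$ beyond the fact that both lie in $\overline{Q'}$. So $w'$ carries no information about $u$ or about the structural set $A$ defining $U=A^*$, and the avoidance properties of the $U_{n,r}$ cannot be invoked. All you have shown is that $x_n$ has a nontrivial left stabiliser, i.e.\ that $x_n$ itself is not right cancellable --- but this is not excluded by any hypothesis, and it certainly does not yield $x_n\in(\beta G)\bb u\bb x_n$. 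There is also a smaller issue in the other sub-case: you need $u$ right cancellable in $\beta G$, not just in $\beta S$, and this deserves a word. Finally, your use of $A$ for a generic separating subset of $G$ clashes with the paper's reserved notation $A$ for the specific set with $U=A^*$.

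The paper's argument avoids your difficulty by never losing the link to the structural set $A$. It first invokes \cite[Theorem~8.18]{HS} to upgrade non-right-cancellability to the existence of $x\in G^*$ with $x\bb u\bb x_n=u\bb x_n$, and then applies Lemma~\ref{6.0} to the two countable sets $Ax_n$ and $Gux_n$ (using $u\in A^*$ to write $ux_n\in\overline{Ax_n}$). The crucial point is that one side of the dichotomy always produces an element of $A$ (or of $\overline{A}$), so that the properties $u\in U_{n,r}$ and condition~(\ref{(6.3)}) apply directly. A second application of Lemma~\ref{6.0} inside case~(i) finishes the job. If you want to salvage your approach, you must from the outset express $ux_n$ via the specific approximation $\overline{Ax_n}$ rather than via an arbitrary separating set; otherwise the contradiction machinery built into $W_n$, $U_{n,r}$, and (\ref{(6.3)}) is simply inaccessible.
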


\begin{proof}  Assume towards a contradiction that $ux_n$ is not right cancellable in $\beta G$. 
 By \cite[Theorem 8.18, (3) $\Rightarrow$ (1)\/]{HS}, there exists 
$x \in  G^*$ such that $ux_n= xux_n$.  Since $ux_n\in \overline{Ax_n}$  and $xux_n \in \overline{Gux_n}$, it follows from Lemma \ref{6.0} 
that one of the following two alternatives must hold:\s

(i) $vx_n = rux_n$ for some $v\in \overline{A}$ and some $r\in G\,$;\s

(ii) $ax_n = yux_n$ for some $a\in A$ and some $y\in \beta G\,$.\s

Suppose that (i) occurs.    Assume that $v \in S$. Then $v^{-1}rux_n= x_n \in W_n$, a contradiction of the fact that 
$u \in U_{n,v^{-1}r}$.  Thus  $v \in A^*$.   It follows that $vx_n  \in \overline{Ax_n} = \overline{A_rx_n}$.  Also,
 $rux_n\in \overline{rA_rx_n}$. By a second application of Lemma \ref{6.0}, one of the following two alternatives must hold:\s

(iii)  $bx_n =ru_1x_n$ for some $u_1\in \overline{A_r}$ and some $b\in A_r\,$;\s

(iv) $u_2x_n =rcx_n$  for some $u_2\in \overline{A_r}$ and some $c\in A_r\,$.\s

Now case (iii)  cannot hold when $u_1 \in A_r$  by (\ref{(6.3)}), and case  (iii)  cannot hold when $u_1 \in A_r^*$ because, 
in this case,  $u_1 \in  U_{n,b^{-1}r}$ and so $x_n = b^{-1}ru_2x_n \not\in W_n$, a contradiction of the fact that  $x_n \in W_n$.  Thus (iii) cannot hold. 
 Similarly, (iv) cannot hold.
 
 We have obtained  a contradiction in the case where (i) holds.
 
Now suppose that (ii) occurs.   Since $a^{-1}yux_n \in \overline{Gux_n}$ and $a^{-1}yux_n = x_n\in W_n$, it follows that there exists $t\in G$ such that 
 $tux_n \in W_n$, a contradiction of the fact $u\in U_{n,t}$.  Thus we have obtained  a contradiction also in the case where (ii) holds.
\end{proof}\s

\begin{lemma}\label{6.3}
For each  $u \in U$ and each $m,n\in\N$,  either $x_m \in Gx_n$ or
 \[
(\beta G)\,ux_m\cap (\beta G)\,ux_n=\emptyset\,.
\]
\end{lemma}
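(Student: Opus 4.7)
My plan is to prove the contrapositive: assume that $x_m \notin Gx_n$ and $(\beta G)ux_m \cap (\beta G)ux_n \neq \emptyset$, and derive a contradiction. Since $(\beta G)ux_i = \overline{Gux_i}$ is the closure of a countable set in $\beta G$ for $i = m,n$, I will iterate Lemma~\ref{6.0} to reduce the putative intersection to one of a few concrete equations. In each case I will either contradict $x_m \notin Gx_n$ directly, or exhibit $x_m \in (\beta G)v x_n$ or $x_n \in (\beta G)v x_m$ for some $v \in A^*$, which contradicts Lemma~\ref{6.1}. The key structural observation is that $U = A^*$, so Lemma~\ref{6.1} is available with $u$ replaced by every auxiliary $v \in A^*$ that arises along the way, not only for the original $u$ from the hypothesis.

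I apply Lemma~\ref{6.0} first to $Gux_m$ and $Gux_n$; by the symmetry $x_m \in Gx_n \Longleftrightarrow x_n \in Gx_m$, I may assume that $rux_m \in \overline{Gux_n}$ for some $r \in G$. Since $u \in \overline{A}$ and both $L_r$ and $R_{x_m}$ are continuous on $\beta G$, I have $rux_m \in \overline{rAx_m} = r\overline{A}x_m$, so Lemma~\ref{6.0} applies again, now to the countable sets $rAx_m$ and $Gux_n$. One alternative gives $rax_m \in (\beta G)ux_n$ for some $a \in A$, whence $x_m = (ra)^{-1}(rax_m) \in (\beta G)ux_n$, contradicting Lemma~\ref{6.1}. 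The other gives $gux_n = ryx_m$ for some $g \in G$ and $y \in \overline{A}$; the sub-case $y \in A$ again places $x_m$ in $(\beta G)ux_n$, so we must have $y \in A^* = U$. Setting $s = r^{-1}g \in G$ yields $yx_m = sux_n$, and a third application of Lemma~\ref{6.0} to $Ax_m$ and $sAx_n$ produces either $ax_m = sy'x_n$ or $sa'x_n = y'x_m$ for some $a,a' \in A$ and $y' \in \overline{A}$. In each sub-case, if $y' \in A$ then inverting the resulting group element gives $x_m \in Gx_n$, contradicting the standing assumption; while if $y' \in A^* = U$ then respectively $x_m \in (\beta G)y'x_n$ or $x_n \in (\beta G)y'x_m$, contradicting Lemma~\ref{6.1} with $u$ replaced by $y'$.

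The main obstacle is that one cannot left-cancel $u$ in $\beta G$, so the bare equation $\alpha ux_m = \beta ux_n$ does not directly expose $x_m$ as a left multiple of $ux_n$. The way around this is the iterated use of Lemma~\ref{6.0}, pushing the exotic factors down into $\overline{A}$, combined with the observation that $U = A^*$ so Lemma~\ref{6.1} continues to apply to the auxiliary accumulation points that appear. Once that is set up, the case analysis closes in three rounds of Lemma~\ref{6.0} as above. The routine identities $\overline{rAx_m} = r\overline{A}x_m$ and $\overline{sAx_n} = s\overline{A}x_n$ that I use in passing follow from the continuity of $L_r$, $L_s$, and $R_{x_i}$ on $\beta G$.
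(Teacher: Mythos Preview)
Your proof is correct and follows the paper's strategy: iterate Lemma~\ref{6.0} to reduce the intersection hypothesis to concrete equations involving elements of $\overline{A}$, then derive a contradiction either with $x_m \notin Gx_n$ or with the defining property of $U$. The minor differences---you open with a direct application of Lemma~\ref{6.0} rather than the paper's appeal to \cite[Corollary 6.20]{HS}, and you package the final contradictions via Lemma~\ref{6.1} instead of unpacking the sets $U_{k,r}$---are cosmetic; if anything your case analysis is more explicit than the paper's, which leaves its alternative (ii) implicit.
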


\begin{proof} Take $k\in\N$ with $k> \max\{m,n\}$.

Suppose that $(\beta G)\,ux_m\cap (\beta G)\,ux_n\neq \emptyset$.  By \cite[Corollary 6.20]{HS}, we may suppose that
 $xux_m =ux_n$ for some $x\in \beta G$.  Now $xux_m \in \overline{Gux_m}$ and $ux_n \in \overline{Ax_n}$, and so it again follows 
 from Lemma \ref{6.0}  that one of the following two alternatives must hold:\s

(i) $sux_m =vx_n$ for some $v\in \overline{A}$ and some $s\in G\,$;\s

(ii) $yux_m =ax_n$  for some $a\in  A$ and some $y\in \beta G\,$.\s

Assume towards a contradiction that (i) holds.  Again we see that  $sux_m \in \overline{sAx_m}$ and  $vx_n \in \overline{Ax_n}$,
 and so it again follows   from Lemma \ref{6.0}  that one of the following two alternatives must hold:\s
 
 (iii) $su_1x_m = bx_n$ for some $u_1\in \overline{A}$ and some $b \in A\,$;\s
 
 (iv) $scx_m = u_2x_n$ for some $u_2\in \overline{A}$ and some $c \in A\,$.\s
 
 However (iii) cannot hold in the case where $u_1\in A^*$ because this would contradict the fact that $u_1 \in U_{k,b^{-1}s}$. Hence $u_1 \in A$, and so 
we can conclude that $x_m \in Gx_n$.  Similarly, (iii) cannot hold in the case where $u_2\in A^*$, and so again  $x_m \in Gx_n$.
\end{proof}\s

 We  set  $x\equiv y$ for $x,y \in  \beta G$ if $x\in Gy$. 

It follows from the above lemmas that we have the following theorem. \s

 \begin{theorem}\label{6.4}  Let $S$ be a   countable  semigroup  that is a subsemigroup of a  group $G$, and
suppose that  $(x_n : n\in\N)$  is a sequence in $S^*\setminus K(S^*)$. Then there is an infinite subset $A$ of $S$ such that, for   each  $u\in A^*$, the 
following properties hold:\s
 
 {\rm (i)} $u$ is cancellable;\s
 
 {\rm (ii)}  $ux_n$ is right cancellable for each $n\in\N$;\s
 
  {\rm (iii)}  for  each $m,n \in\N$, either $x_m \equiv x_n$ or $(\beta G)\,ux_m\cap (\beta G)ux_n=\emptyset$.\quad\qedhere  
 \end{theorem}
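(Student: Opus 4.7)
The plan is to extract the infinite subset $A$ already constructed in the preamble to the theorem and to verify that it satisfies the three stated properties, each of which is essentially a reformulation of one of Lemmas \ref{6.1}--\ref{6.3}.

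First I would take $A$ to be exactly the set for which $U = A^*$, as produced by the construction preceding the lemmas. Recall that this construction proceeded in the following steps: fix $q \in K(S^*)$, choose for each $n$ a clopen set $W_n$ separating $\{x_1,\ldots,x_n\}$ from $\bigcup_{i\le n}(\beta G)\,qx_i$, define $U_{n,r}$ so that every $U_{n,r}$ is a clopen neighbourhood of $q$, and invoke \cite[Theorem 3.36]{HS} which gives that the countable intersection $\bigcap_{n,r}U_{n,r}$ has non-empty interior in $S^*$. Then intersect the resulting clopen neighbourhood with the dense open set of cancellable elements of $\beta S$ from \cite[Theorem 8.34]{HS}, obtaining $U$, and finally pass to a subset of $A$ so that the arithmetic separation property (\ref{(6.3)}) holds. (In the case $S=\N$ one could further intersect with $\mathbb{H}$.)

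Next I would verify the three properties. For (i), cancellability of every $u \in U = A^*$ is built into the construction of $U$ by the intersection with the cancellable elements of $\beta S$. For (ii), right cancellability of $ux_n$ in $\beta G$ for every $u\in A^*$ and every $n \in \N$ is precisely Lemma \ref{6.2}. For (iii), I would apply Lemma \ref{6.3} and observe that its conclusion ``$x_m \in Gx_n$'' is, by the definition introduced immediately before the theorem, exactly ``$x_m \equiv x_n$''. Assembling these three observations produces the conclusion.

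Since the heavy lifting has been carried out in Lemmas \ref{6.1}--\ref{6.3} (which repeatedly reduce to the dichotomy furnished by Lemma \ref{6.0} and then rule out each resulting case via the defining property $U \subset U_{n,r}$ or via the separation condition (\ref{(6.3)})), the theorem itself is a direct assembly and presents no new obstacle. If one were reconstructing the argument from scratch the principal difficulty would be the double application of Lemma \ref{6.0} in Lemma \ref{6.3}, where one must carefully distinguish between the subcases in which the witness lies in $A$ (handled by (\ref{(6.3)})) and in $A^*$ (handled by membership in the appropriate $U_{k,r}$); but given the preceding lemmas, no further work is required here.
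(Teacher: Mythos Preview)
Your proposal is correct and matches the paper's approach exactly: the paper states only that ``it follows from the above lemmas that we have the following theorem'' and gives no further argument, so the theorem is indeed just the assembly of the construction of $U=A^*$ (giving (i)), Lemma~\ref{6.2} (giving (ii)), and Lemma~\ref{6.3} together with the definition of $\equiv$ (giving (iii)). Your summary of the construction and of where the real work lies is accurate; note only that Lemma~\ref{6.1} does not itself correspond to any of (i)--(iii), though it is part of the surrounding development.
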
\medskip
 
 \section{The radical of some semigroup algebras} \noindent Here we begin to study $J(S^*)$, the radical of $\ell^{\,1}(S^*)$, for suitable semigroups $S$.
In part\-icular, for the remainder of the paper, our semigroups are abelian.\s

 \begin{theorem}\label{6.5}
Let   $S$ be a cancellative, countable, abelian semigroup, and  suppose that  $f\in J(S^*)$ or $f\in J_0(S^*)$.   Then $\supp f \subset  K(S^*)$.   
\end{theorem}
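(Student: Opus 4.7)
The plan is to argue by contradiction: suppose that $f\in J(S^*)$ (the same argument will apply for $f\in J_0(S^*)$) with some $x_0\in\supp f\setminus K(S^*)$, and let $G$ be the group of quotients of $S$. I would enumerate $\supp f\setminus K(S^*)$ as a sequence $(x_n)$, padding by repetition if it is finite, and apply Theorem \ref{6.4} to obtain an infinite $A\subset S$ such that, for every $u\in A^*$, $u$ is cancellable, each $ux_n$ is right cancellable in $\beta G$, and $(\beta G)\,ux_m\cap(\beta G)\,ux_n=\emptyset$ whenever $x_m\not\equiv x_n$. Fixing such a $u$, the ideal property places $\delta_u\,\star\,f$ in $J(S^*)$ (respectively in $J_0(S^*)$). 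My goal is to show $\nu_{\ell^{\,1}}(\delta_u\,\star\,f)>0$, which directly contradicts quasi-nilpotence in $\ell^{\,1}(S^*)$; for the $J_0$ case I use in addition that algebraic invertibility of $ze_A-\delta_u\,\star\,f$ in $(\C S^*)^{\sharp}$ for each $z\neq 0$ transfers to $\ell^{\,1}(S^*)^{\sharp}$, so also forces $\nu_{\ell^{\,1}}=0$.

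Split $f=f_1+f_2$ with $\supp f_1=\supp f\setminus K(S^*)$ and $\supp f_2\subset K(S^*)$, and set $h_1=\delta_u\,\star\,f_1=\sum_n f(x_n)\delta_{ux_n}$ and $h_2=\delta_u\,\star\,f_2$, so $\supp h_2\subset K(S^*)$ by the ideal property of $K(S^*)$. Pick representatives $y_\alpha$ for the $\equiv$-classes in $\{x_n\}$, write $x_n=s_ny_{[n]}$ with $s_n\in G$, and use the centrality of $G$ in $\beta S$ to factor
\[
ux_{i_1}\cdots ux_{i_k}=(s_{i_1}\cdots s_{i_k})\,P(\vec c),\qquad P(\vec c):=uy_{c_1}\cdots uy_{c_k},
\]
where $\vec c=([i_1],\ldots,[i_k])$. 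An induction using Theorem \ref{6.4}(iii) and the right cancellability of each $uy_{c_\ell}$ shows that distinct class sequences yield distinct $P(\vec c)$, and $P(\vec c)$ is itself right cancellable in $\beta G$ as a product of right cancellables, so distinct $s\in G$ give distinct points $sP(\vec c)$. Defining $F_\alpha\in\ell^{\,1}(G)$ by $F_\alpha(s)=f(sy_\alpha)$ when $sy_\alpha\in\supp f\setminus K(S^*)$ and zero otherwise, the coefficient of $sP(\vec c)$ in $h_1^{\,\star\,k}$ then equals the convolution $(F_{c_1}\,\star\,\cdots\,\star\,F_{c_k})(s)$ in $\ell^{\,1}(G)$.

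A short separate argument shows $P(\vec c)\notin K(S^*)$: otherwise, in its enclosing group the identity $e$ satisfies $eP(\vec c)=P(\vec c)=1_G\cdot P(\vec c)$ in $\beta G$, so right cancellability of $P(\vec c)$ in $\beta G$ forces $e=1_G$, contradicting $e\in K(S^*)\subset S^*$. Translation by $s\in G$ preserves non-membership in the ideal $K(S^*)$, so $h_1^{\,\star\,k}$ is supported off $K(S^*)$, while every term in the expansion of $(\delta_u\,\star\,f)^{\,\star\,k}=(h_1+h_2)^{\,\star\,k}$ that contains $h_2$ lies in $\ell^{\,1}(K(S^*))$. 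The $\ell^{\,1}$-norm then splits on disjoint supports, giving
\[
\lV(\delta_u\,\star\,f)^{\,\star\,k}\rV\geq\lV h_1^{\,\star\,k}\rV\geq\lV F_{c_0}^{\,\star\,k}\rV_{\ell^{\,1}(G)}\geq\nu_{\ell^{\,1}(G)}(F_{c_0})^k
\]
for any class $c_0$ with $F_{c_0}\neq 0$; since $\ell^{\,1}(G)$ is commutative and semisimple, $\nu_{\ell^{\,1}(G)}(F_{c_0})>0$, producing the desired strict positivity. The main obstacle is the middle step: the combinatorial bookkeeping that consolidates the many contributing index tuples $(i_1,\ldots,i_k)$ into a single convolution in $\ell^{\,1}(G)$, together with the auxiliary observation that right cancellability in $\beta G$ excludes membership in $K(S^*)$.
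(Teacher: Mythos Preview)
Your argument is correct and follows essentially the same route as the paper: invoke Theorem~\ref{6.4}, exploit the right cancellability of the $ux_n$ together with the dichotomy in (iii) to show that products coming from different $\equiv$-class sequences cannot collide, and then reduce to a nonzero element of $\ell^{\,1}(G)$, which has no nonzero quasi-nilpotents.

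The only difference is organizational. The paper first replaces $f$ by $\delta_u\star f$ and then immediately restricts to a single class $T_1=G+x$, obtaining $h=f\mid T_1$ with $h^{*\,n}=f^{*\,n}\mid T_n$; this avoids the bookkeeping of class sequences $\vec c$ and the functions $F_\alpha$, and the transfer to $\ell^{\,1}(G)$ is then a one-line substitution $\varphi(y)=h(y+x)$. Your version tracks all classes at once and restricts only at the end, which is bulkier but has the minor advantage that you spell out explicitly why $\supp h_1^{*\,k}$ misses $K(S^*)$ (via right cancellability of $P(\vec c)$ and the idempotent argument), a point the paper leaves implicit when it writes $h^{*\,n}=f^{*\,n}\mid T_n$. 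One small wording issue: your clause ``translation by $s\in G$ preserves non-membership in the ideal $K(S^*)$'' is not obviously true as stated, since $K(S^*)$ is an ideal in $S^*$, not in $\beta G$; the clean reason, which you already have at hand, is simply that $sP(\vec c)$ is again right cancellable in $\beta G$, so the same idempotent argument applies.
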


\begin{proof} We take $G$ to be the group of quotients of $S$, so that $S$ is a subsemigroup of $G$ and Theorem \ref{6.4} applies. 
 We now denote the semigroup operation in $G^*$ by `$+$'; for $x \in \beta G$ and $n\in\N$,  we write $n\,\ast\,x$ for
 $x  + \cdots+ x$, where there are $n$ copies of $x$.

Assume towards a contradiction that $\supp f \not\subset  K(S^*)$, and  set  
\[
 X= \supp f \setminus K(S^*)\,,
\]
so that $X$ is a countable, non-empty set.

By Theorem \ref{6.4},  there exists $u\in \beta S$ such that $u$ is cancellable, such that $ux$ is right cancellable for each $x\in X$, and, furthermore, for each
$x,y\in X$, either $x \equiv y$ or $(\beta G)\,ux\cap (\beta G)\,uy=\emptyset$.
 By replacing  each $x\in X$ by $ux$ and 
 replacing $f$ by $\delta_u\,\star\, f$, we may suppose that $x$ is right cancellable for each $x\in X$ and   that, for each
$x,y\in X$, either $x \equiv y$ or $(\beta G)\,x\cap (\beta G)\,y=\emptyset$.
Note that it remains true that $f \in J(S^*)$ or $f\in J_0(S^*)$ because $J(S^*)$ and  $J_0(S^*)$ are  ideals in $\ell^{\,1}(S^*)$ and $\C S^*$, respectively, 
and so, in either case, $\lim_{n\to\infty}\lV f^{*n}\rV^{1/n} =0$.  Further,  $\lV f\rV =\lV \delta_u\bb f\rV $ because
 $u$ is cancellable, and so we have not changed the value of $\lV f\rV$.
 
 Suppose that 
\begin{equation}\label{(6.5)}
x_{i_1}+ \cdots +x_{i_k}\equiv x_{j_1}+\cdots +x_{j_m}\,,
\end{equation}
 where $x_{i_1}, \dots,x_{i_k}, x_{j_1},\dots,x_{j_m}\in X$.  Then $(\beta G  + x_{i_k})\cap (\beta G + x_{j_m})\neq \emptyset$, and so $x_{i_k}\equiv x_{j_m}$. 
Since $x_{i_k}$ and $x_{j_m}$ are right cancellable, it follows that 
\[
x_{i_1}+ \cdots +x_{i_{k-1}}\equiv  x_{j_1}+\cdots +x_{j_{m-1} }\,.
\]
By repeating this argument, we see that it follows from (\ref{(6.5)}) that $k=m$ and $x_{i_r}\equiv x_{j_r}$ for all $r\in\{1,...,k\}$. 
 
Choose $x \in X$, and set $T_n = G + n\,\ast\,x$ for $n\in \N$.  Set $h = f\mid T_1$, so that $h\in \ell^{\,1}(S^*)$. Since $f(x)\neq 0$, we have  $h(x)\neq 0$.  
By the remark of the previous paragraph, it follows that, for each $n \in \N$, we have
$h^{*\,n} = f^{*\,n}\mid T_n$, and so $\lV  h^{*\,n} \rV \leq  \lV  f^{*\,n} \rV$.  Consequently, $\lim_{n\to\infty}\lV h^{*n}\rV^{1/n} =0$ and $h \in {\mathcal Q}(S^*)$.  
  Now define $\varphi \in \ell^{\,1}(G)$ by 
 \[
\varphi (y ) = h(y+x)\quad (y\in G)\,.
\]
Then $\lV \varphi^{*\,n}\rV \leq \lV h^{*\,n}\rV \,\;(n\in\N)$, and so $\varphi \in  {\mathcal Q}(G)$.  However $ {\mathcal Q}(G)=\{0\}$
 because $G$ is an abelian group, and so $\varphi =0$. Hence $h(x) =0$, a contradiction.
 
 We conclude that  $\supp f \subset  K(S^*)$.
\end{proof}\s

\begin{corollary}\label{6.5a}
Let   $S$ be a cancellative, countable, abelian semigroup.  Then  $\ell^{\,1}(S^*)$  is semisimple if and only if  $\ell^{\,1}(K(S^*))$ is semi\-simple. 
\end{corollary}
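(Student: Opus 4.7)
The plan is to derive this immediately from Theorem \ref{6.5} together with Proposition \ref{1.2}(i). First, since $S$ is cancellative and hence weakly cancellative, the remarks in Section~3 guarantee that $(S^*,\Box)$ is itself a compact, right topological semigroup, so Theorem \ref{str} supplies a unique minimum ideal $K(S^*)\subset S^*$. Because $K(S^*)$ is a two-sided ideal in the semigroup $S^*$, the subspace
\[
\ell^{\,1}(K(S^*))=\{f\in\ell^{\,1}(S^*):\supp f\subset K(S^*)\}
\]
is a closed ideal of the Banach algebra $\ell^{\,1}(S^*)$, and it coincides with the semigroup algebra of $K(S^*)$ in the sense of Definition \ref{2.1}.

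I would then apply Proposition \ref{1.2}(i) with $A=\ell^{\,1}(S^*)$ and $I=\ell^{\,1}(K(S^*))$ to obtain
\[
J(K(S^*))=J(S^*)\cap\ell^{\,1}(K(S^*)).
\]
Theorem \ref{6.5} asserts that $\supp f\subset K(S^*)$ for every $f\in J(S^*)$; equivalently, $J(S^*)\subset\ell^{\,1}(K(S^*))$. Hence the intersection collapses to give $J(K(S^*))=J(S^*)$, and the equivalence of semisimplicity follows at once: $J(S^*)=\{0\}$ if and only if $J(K(S^*))=\{0\}$.

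There is essentially no obstacle here, since all of the substantive work has been done in Theorem \ref{6.5}. The only bookkeeping required is the verification that $K(S^*)$ is a genuine two-sided semigroup ideal, so that $\ell^{\,1}(K(S^*))$ is a two-sided closed ideal in the convolution algebra $\ell^{\,1}(S^*)$ and Proposition \ref{1.2}(i) is applicable in the form stated.
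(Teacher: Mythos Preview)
Your proof is correct and follows essentially the same approach as the paper: both arguments combine Theorem~\ref{6.5} with Proposition~\ref{1.2}(i), using that $\ell^{\,1}(K(S^*))$ is a closed ideal in $\ell^{\,1}(S^*)$. Your presentation is in fact slightly cleaner, since you observe the equality $J(K(S^*))=J(S^*)$ directly rather than treating the two implications separately.
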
\s

\begin{proof} Assume that $\ell^{\,1}(K(S^*))$ is semisimple, and take $f \in J(S^*)$. Then, by the theorem, $\supp f \subset  K(S^*)$,  and so 
$f\in J(S^*)\cap \ell^{\,1}(K(S^*)) \subset J(K(S^*))=\{0\}$. Thus  $f=0$, and so  $\ell^{\,1}(S^*)$ is semi\-simple.

Assume that $\ell^{\,1}(S^*)$ is semi\-simple. By Proposition \ref{1.2}(i), $J(K(S^*)) =\{0\}$, and so $\ell^{\,1}(K(S^*))$ is semisimple.
\end{proof}\s

\begin{proposition}\label{6.6}
Let $f\in J({\mathbb H})$. Then $\supp f \subseteq K\cap {\mathbb H}$.
\end{proposition}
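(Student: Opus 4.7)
The plan is to adapt the argument of Theorem \ref{6.5} to the subsemigroup $\mathbb{H}$ of $(\N^*, +)$; since $\supp f \subseteq \mathbb{H}$ automatically, the task reduces to showing $\supp f \subseteq K(\N^*)$. Take $S = \N$, $G = \Z$, and assume for contradiction that $X := (\supp f) \setminus K(\N^*)$ is non-empty. Enumerate $X$ and apply Theorem \ref{6.4}; the remark preceding that theorem allows us, in the special case $S = \N$, to arrange that the clopen set $U$ lies inside $\mathbb{H}$, yielding a cancellable $u \in \mathbb{H}$ such that each $u + x$ (for $x \in X$) is right cancellable in $\beta\Z$ and the principal left ideals $\beta\Z + (u+x)$ are pairwise disjoint unless the $x$'s are $\Z$-equivalent. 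Replace $f$ by $\delta_u \,\star\, f$, which remains in $J(\mathbb{H})$ because $u \in \mathbb{H}$ and $J(\mathbb{H})$ is an ideal in $\ell^{\,1}(\mathbb{H})$, and relabel the new $X$ accordingly.

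The new ingredient beyond Theorem \ref{6.5} is the observation that $\mathbb{H}$ meets each $\Z$-coset in at most one point: if $y, y' \in \mathbb{H}$ and $y - y' \in \Z$, then $q_{2^n}(y - y') = 0$ for every $n$, forcing $y - y' = 0$. Consequently, fixing $x \in X$ and setting $T_n := \Z + n\,\ast\,x$, the restriction $h := f \mid T_1$ collapses to the single term $f(x)\delta_x$, whence $h^{*\,n} = f(x)^n \delta_{n\,\ast\,x}$ and $\lV h^{*\,n}\rV = \lv f(x)\rv^n$.

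The heart of the argument is to show $h^{*\,n} = f^{*\,n} \mid T_n$, and hence $\lV h^{*\,n}\rV \leq \lV f^{*\,n}\rV$, exactly as in Theorem \ref{6.5}: iterating right cancellability peels off summands from any tuple $(y_1,\dots,y_n) \in (\supp f)^n$ with $\sum y_j \in T_n$, forcing $y_j \equiv x \pmod{\Z}$ when all $y_j \in X$, and the one-point observation then pins each $y_j$ to $x$; any contribution from some $y_j \in (\supp f) \cap K(\N^*)$ forces $\sum y_j \in K(\N^*)$ and is disposed of just as in Theorem \ref{6.5}. Since $f \in J(\mathbb{H}) \subseteq {\mathcal Q}(\mathbb{H})$, we conclude $\lv f(x) \rv = \lV h^{*\,n}\rV^{1/n} \leq \lV f^{*\,n}\rV^{1/n} \to 0$, contradicting $f(x) \neq 0$. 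The main obstacle is precisely this bound $\lV h^{*\,n}\rV \leq \lV f^{*\,n}\rV$ --- specifically ruling out spurious $K(\N^*)$-contributions to $f^{*\,n} \mid T_n$ --- for which we rely on the coset and cancellation machinery already deployed in the proof of Theorem \ref{6.5}, together with the simplification $T_1 \cap \mathbb{H} = \{x\}$ provided by the observation.
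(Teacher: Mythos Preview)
Your proposal is correct and follows essentially the same approach as the paper: rerun the proof of Theorem~\ref{6.5} with the cancellable element $u$ chosen inside $\mathbb{H}$ (as the remark preceding Theorem~\ref{6.4} allows), so that $\delta_u \star f$ remains in $J(\mathbb{H})$ and the spectral-radius argument applies. Your additional observation that $\mathbb{H}$ meets each $\Z$-coset in a single point is correct and gives a mild shortcut in the endgame, but the paper does not use it---it simply notes that the proof of Theorem~\ref{6.5} goes through verbatim once $u\in\mathbb{H}$, including the passage to $\varphi\in\ell^{\,1}(\Z)$ and the use of $\mathcal{Q}(\Z)=\{0\}$.
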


\begin{proof}   We observed in the course of the above discussion that, in the case where $S=\N$ and $G=\Z$, we could have chosen our non-empty subset
 $U$ to be a subset of ${\mathbb H}$. Then the given proof leads to  the stated result.
\end{proof}\s

A {\it rectangular semigroup\/} is a semigroup $R$ that, as a set, has the form $A \times B$, where $A$ and $B$ are non-empty sets, and the product is given by
$(a,b)(c,d) = (a,d)$ for $a,c \in A$ and $b,d \in B$, so that all elements of $R$ are idempotents. Let $R =A\times B$ be such a semigroup.
 In the following we denote the semigroup action by  juxtaposition, and we write $\pi_A$ and $\pi_B$  for the projections onto $A$ and $B$, respectively.
 Fix two distinct  elements,  $b_1$ and $b_2$ in $B$,  and consider  the set $U$ of  pairs $\{u,v\}$ of elements $R$ such that $\pi_B(u) = b_1$ and 
 $\pi_B(v) = b_2$.  Note that, for $\{u_1,v_1\}$  and $\{u_2,v_2\}$  in $U$, we have
\begin{equation}\label{(6.6)}
u_1u_2 = u_1,\quad u_1v_2 = u_2,\quad v_1u_2=u_1,\quad v_1v_2 =v_1\,.
\end{equation}
Also note that the set $U$ is closed under left-translation by elements of $R$.

Consider the set $N$ of elements $f\in \ell^{\,1}(R)$ of the form  $f = \delta_u-\delta_v$, where $\{u,v\}\in U$, so that $N \subset \C R$.
 Then it follows from (\ref{(6.6)}) that $f_1\,\star\,f_2 = 0$ whenever $f_1,f_2 \in N$.   Further, $N$ is closed under left-translations by elements
of $R$.   Take $f \in N$ and $g\in \ell^{\,1}(R)^\sharp$. Then $g\,\star\,f$ has the form $h= \sum_{i=1}^\infty \alpha_i f_i$, where 
$\alpha_i\in \C\,\;(i\in\N)$, $\sum_{i=1}^\infty \lv \alpha_i \rv < \infty$, and $f_i\in N\,\;(i\in\N)$.  Thus  $h\,\star\,h = 0$.   We conclude that each such element 
$g\,\star\,f$  is nilpotent of index at most 2, and so $f\in J(R)\cap J_0(R)$. Thus $N\subset J(R)\cap J_0(R)$.    

This implies the following result.\s

\begin{proposition}\label{6.8}
Let $R= A\times B$ be a rectangular semigroup with $\lv B \rv \geq 2$.  Then $\dim J(R) \geq \lv A \rv$ and $\dim J_0(R) \geq \lv A \rv$. In particular,
the algebras  $\ell^{\,1}(R)$  and $\C R$ are not semisimple.\quad\qedhere
\end{proposition}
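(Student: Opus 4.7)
The heavy lifting has already been done in the discussion preceding the statement: the set
\[
N = \{\delta_u - \delta_v : \{u,v\} \in U\}
\]
is contained in $J(R) \cap J_0(R)$. My plan is simply to exhibit a linearly independent subfamily of $N$ of cardinality $|A|$.

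For each $a \in A$, set $f_a = \delta_{(a,b_1)} - \delta_{(a,b_2)}$. Since $\pi_B(a,b_1) = b_1$ and $\pi_B(a,b_2) = b_2$, we have $\{(a,b_1),(a,b_2)\} \in U$, and hence $f_a \in N \subset J(R)\cap J_0(R)$ by the preceding paragraph. The support of $f_a$ is $\{(a,b_1),(a,b_2)\}$, and these two-element sets are pairwise disjoint as $a$ ranges over $A$. It follows at once that $(f_a)_{a \in A}$ is linearly independent in $\ell^{\,1}(R)$ (and in $\C R$), so that
\[
\dim J(R) \geq |A| \quad\text{and}\quad \dim J_0(R) \geq |A|\,.
\]

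For the final assertion, since $A$ is non-empty we have $|A|\geq 1$, so both $J(R)$ and $J_0(R)$ contain the non-zero element $f_a$ for any chosen $a \in A$. Hence neither $\ell^{\,1}(R)$ nor $\C R$ is semisimple.

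There is no real obstacle here: the only content of the proof beyond the preceding discussion is the observation that varying $a\in A$ while keeping $b_1,b_2 \in B$ fixed produces elements of $N$ with disjoint supports. Consequently the proof is essentially a one-line appeal to the construction of $N$ together with this trivial independence remark.
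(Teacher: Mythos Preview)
Your argument is correct and is precisely the content implicit in the paper's phrase ``This implies the following result'': the paper does not spell out the linear-independence step, and your choice of the elements $f_a=\delta_{(a,b_1)}-\delta_{(a,b_2)}\in N$ with pairwise disjoint supports is exactly the intended observation.
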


The result has relevance to our main question because it is a result of the third author (\cite{Ze}, \cite[Theorem 9.41]{HS}) that $\N^*$ contains a copy of such a 
rectangular semigroup $R = A\times B$  with $\lv A \rv = \lv B \rv = 2^{\,\mathfrak c}$. 
 Thus we have many `very large' semigroups $R$ in $\N^*$ such that $\ell^{\,1}(R)$  and $\C R$ are far from being  semisimple.
\medskip

\section{A condition for semisimplicity} 

\noindent We now give our main description of $J(\beta S)$ and $J(S^*)$ for a cancellative, countable, abelian semigroup $S$.\s

\begin{theorem}\label{5.0}
 Let $W$ be a compact, right topological semigroup, and suppose that    $f\in \ell^{\,1}(K(W))$. 
Then the following are equivalent:\s

{\rm (a)}  $\delta_p\,\star\,f\,\star\, \delta_q =0$ for each $p,q \in K(W)$;\s

{\rm (b)}  $g_1\,\star\,f\,\star\,g_2\,\star\,f\,\star\,g_3\,\star\,f= 0$ for each $g_1,g_2,g_3 \in \ell^{\,1}(W)$;\s

{\rm (c)}  $f \in J(W)$.
\end{theorem}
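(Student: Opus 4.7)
The plan is to establish the cycle $\mathrm{(c)} \Rightarrow \mathrm{(a)} \Rightarrow \mathrm{(b)} \Rightarrow \mathrm{(c)}$, leaning respectively on Theorem \ref{str}, Proposition \ref{2.20}, and Theorem \ref{1.1}.

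For $\mathrm{(c)} \Rightarrow \mathrm{(a)}$, fix $p,q \in K(W)$. Since $J(W)$ is a two-sided ideal of $\ell^{\,1}(W)$, the element $\delta_p\,\star\,f\,\star\,\delta_q$ lies in $J(W)$, and its support is plainly contained in $pK(W)q$. By Theorem \ref{str}(iii), $pK(W)q$ is a subgroup of $W$, so Proposition \ref{2.20} forces $\delta_p\,\star\,f\,\star\,\delta_q = 0$. This is the step where the structural input of the paper does all the work.

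For $\mathrm{(a)} \Rightarrow \mathrm{(b)}$, I first extend (a) by bilinearity and $\|\cdot\|$-continuity to the statement that $h\,\star\,f\,\star\,h' = 0$ for all $h,h' \in \ell^{\,1}(K(W))$; the needed convergence follows from $\|\delta_p\,\star\,f\,\star\,\delta_q\| \le \|f\|$. Given $g_1,g_2,g_3 \in \ell^{\,1}(W)$, the fact that $K(W)$ is an ideal of $W$ guarantees that both $g_1\,\star\,f\,\star\,g_2$ and $g_3\,\star\,f$ lie in $\ell^{\,1}(K(W))$. Associativity of convolution lets me rebracket
\[
g_1\,\star\,f\,\star\,g_2\,\star\,f\,\star\,g_3\,\star\,f \;=\; (g_1\,\star\,f\,\star\,g_2)\,\star\,f\,\star\,(g_3\,\star\,f),
\]
and the right-hand side vanishes by the extended identity. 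Finding the right bracketing is the one mildly clever move in the argument.

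For $\mathrm{(b)} \Rightarrow \mathrm{(c)}$, the observation is simply that $f$ itself lies in $\ell^{\,1}(K(W)) \subseteq \ell^{\,1}(W)$ and so is admissible as each $g_i$. Substituting $g_1 = g_2 = g_3 = f$ in (b) gives $f^{*6} = 0$, so $f \in {\mathcal N}(W) \subseteq {\mathcal Q}(W)$; substituting $g_1 = g_2 = g_3 = g$ for an arbitrary $g \in \ell^{\,1}(W)$ gives $(g\,\star\,f)^{*3} = 0$, so $g\,\star\,f \in {\mathcal Q}(W)$. The characterization in Theorem \ref{1.1} now yields $f \in J(W)$.

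The main obstacle is the bracketing in $\mathrm{(a)} \Rightarrow \mathrm{(b)}$: one must recognize that the six-fold alternating product can be regrouped so that a single $f$ is sandwiched between two factors already known to be supported in $K(W)$. The remaining implications reduce directly to results previously established in the paper, and the length-six (rather than length-two or length-four) form of (b) is precisely what makes the self-substitution $g_i = f$ work in the final step.
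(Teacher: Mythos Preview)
Your proof is correct and follows essentially the same route as the paper's: the implications $\mathrm{(c)}\Rightarrow\mathrm{(a)}$ and $\mathrm{(b)}\Rightarrow\mathrm{(c)}$ are identical, and your $\mathrm{(a)}\Rightarrow\mathrm{(b)}$ is just an abstract rephrasing of the paper's explicit expansion (the paper reduces to $g_i=\delta_{y_i}$ and writes the triple product as $\delta_{y_1}\star\sum_{i,j}\alpha_i\alpha_j\bigl(\delta_{x_iy_2}\star f\star\delta_{y_3x_j}\bigr)$, which is precisely your observation that the outer factors land in $\ell^{\,1}(K(W))$). The only cosmetic difference is that the paper notes $f^{*3}=0$ rather than $f^{*6}=0$, obtained directly from (a) via $f\star f\star f=\sum_{i,k}\alpha_i\alpha_k\,\delta_{x_i}\star f\star\delta_{x_k}$.
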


\begin{proof}    We suppose that $f =\sum_{i=1}^\infty \alpha_i\delta_{x_i}$, where $\alpha_i\in \C$ 
and $x_i\in K(W)$ for $i\in\N$ and where  $\sum_{i=1}^\infty\lv \alpha_i\rv < \infty$. \s

(a) $\Rightarrow$ (b)  It suffices to prove (b) in the special case in which $g_1 = \delta_{y_1}$,  $g_2 = \delta_{y_2}$, and  $g_3 = \delta_{y_3}$ 
for some $y_1,y_2,y_3 \in W$. But in this case
\[
(g_1\,\star\,f)\,\star\,(g_2\,\star\,f)\,\star\,(g_3\,\star\,f)= \delta_{y_1}\,\star\,\sum_{i,j=1}^\infty {\alpha_i\alpha_j }(\delta_{x_i y_2}\,\star\,f\,\star\,
\delta_{y_3x_j})\,.
\]
Since $x_iy_2, y_3x_j \in K(W)$ for each $i,j \in\N$, it follows from (a) that each term in the bracket is $0$, 
 and so $g_1\,\star\,f\,\star\,g_2\,\star\,f\,\star\,g_3\,\star\,f= 0$.\s
 
 (b) $\Rightarrow$ (c)  By (b), $g\,\star\, f$ is nilpotent of index at most  $3$ for each $g\in \ell^{\,1}(W)$. More directly, $f$ itself
  is nilpotent of index at most  $3$. Thus  (c) follows  from Theorem \ref{1.1}.\s
 
(c) $\Rightarrow$ (a) Take $p,q \in K(W)$, and set $G= pK(W)q$, so that $G$ is a subgroup of $W$ by Theorem \ref{str}(iii).  Since  
$\supp(\delta_p\,\star\,f\,\star\, \delta_q) \subset G$, it follows from Proposition \ref{2.20} that $\delta_p\,\star\,f\,\star\, \delta_q=0$,
giving (a).\end{proof}\s

Suppose that $f\in \C K(W)$, in  the above notation.  Then the theorem still holds, with clause (c) replaced by `$f \in J_0(K(W))$'; in the proof of the implication 
(c) $\Rightarrow$ (a), we use  Proposition \ref{2.21}, rather than Proposition \ref{2.20}.    It follows that  $J(W) \cap \C W = J_0(W)$. \s

\begin{theorem}\label{5.1b}
Let $S$ be a cancellative, countable, abelian semigroup, and 
suppose that  $f \in \ell^{\,1}(S^*)$. Then $f\in J(S^*)$ if and only if $\supp f \subset K(S^*)$ and $\delta_p\,\star\,f\,\star\, \delta_q =0$ for each $p,q \in K(S^*)$.

Further, in this case, $g\,\star\,f$ is nilpotent of index at most 3 for each $g \in \ell^{\,1}(S^*)$.
\end{theorem}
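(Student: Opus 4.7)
The plan is to obtain this statement as a direct corollary of the two previous results: Theorem \ref{6.5}, which pins down the support of any $f\in J(S^*)$, and Theorem \ref{5.0}, which characterises the radical within $\ell^{\,1}(K(W))$ for an arbitrary compact, right topological semigroup $W$. The bridge between them is the observation that, since $S$ is cancellative and hence weakly cancellative, $S^*$ is an ideal in $\beta S$ and $(S^*,\Box)$ is itself a compact, right topological semigroup; thus Theorem \ref{5.0} applies with $W = S^*$, and the minimum ideal $K(W)$ in that theorem coincides with $K(S^*)$.

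For the forward direction, I would start from $f \in J(S^*)$ and first invoke Theorem \ref{6.5} to deduce that $\supp f \subset K(S^*)$. This places $f$ in $\ell^{\,1}(K(S^*))$, which is exactly the hypothesis needed to apply Theorem \ref{5.0}. The implication (c) $\Rightarrow$ (a) of that theorem then yields $\delta_p\,\star\,f\,\star\,\delta_q =0$ for all $p,q \in K(S^*)$.

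For the converse, assuming $\supp f \subset K(S^*)$ and $\delta_p\,\star\,f\,\star\,\delta_q = 0$ for all $p,q \in K(S^*)$, I would apply the implication (a) $\Rightarrow$ (c) of Theorem \ref{5.0} directly to conclude that $f \in J(S^*)$. The final nilpotency claim then follows from (a) $\Rightarrow$ (b) of Theorem \ref{5.0}: specialising $g_1 = g_2 = g_3 = g \in \ell^{\,1}(S^*)$ gives $g\,\star\,f\,\star\,g\,\star\,f\,\star\,g\,\star\,f = 0$, which is exactly $(g\,\star\,f)^{*\,3} = 0$.

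There is really no obstacle in the argument; all the work has been carried out in Theorems \ref{5.0} and \ref{6.5}. The one point that merits an explicit sentence of justification is the verification that $W = S^*$ is a legitimate choice in Theorem \ref{5.0}, which is handled by the weak cancellativity of $S$ together with the structure theorem guaranteeing the existence of $K(S^*)$.
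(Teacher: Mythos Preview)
Your proposal is correct and follows essentially the same argument as the paper: both combine Theorem~\ref{6.5} for the support condition with the equivalences of Theorem~\ref{5.0} for the rest. The only cosmetic difference is that the paper applies Theorem~\ref{5.0} with $W=\beta S$ (using $K(\beta S)=K(S^*)$ and $J(S^*)=J(\beta S)\cap \ell^{\,1}(S^*)$), whereas you take $W=S^*$ directly; your choice is marginally cleaner but the content is identical.
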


\begin{proof}   Suppose that $f\in J(S^*)$. Then  $\supp f \subset K(S^*)$ by Theorem \ref{6.5}, and hence  $f \in \ell^{\,1}(K(S^*))$.  Now take $p,q \in K(S^*)$.  Then 
we have  $\delta_p\,\star\,f\,\star\, \delta_q =0$ by  the implication (c) $\Rightarrow$ (a) of Theorem \ref{5.0} (applied with $W=\beta S$).

Conversely, suppose that $f$ satisfies the two stated conditions.  Then $f \in \ell^{\,1}(K(S^*))$, and so $f\in J(S^*)$ by the implication (a) $\Rightarrow$ (c) 
of Theorem \ref{5.0}.

Now suppose that $f\in J(S^*)$. Then $(g\,\star\,f)^{*\,3}= 0$ for each $g \in \ell^{\,1}(S^*)$ by the implication (c) $\Rightarrow$ (b) of Theorem \ref{5.0}.
\end{proof}\s

Similarly, for an element $f \in \C S^*$, we have $f\in J_0(S^*)$ if and only if $\supp f \subset K(S^*)$ and $\delta_p\,\star\,f\,\star\, \delta_q =0$ 
for each $p,q \in K(S^*)$.

The above theorem concerns the algebra $(\ell^{\,1}(S^*),\Box)$. However, our earlier remarks show that the same characterization applies to the
 radical of  $\ell^{\,1}(S^*, \Diamond)$.  

We further remark that, for   each $f\in J(S^*)$, there exists $p\in K(S^*)$ such that $g\,\star\,f\,\star \,\delta_p$ is nilpotent of index at most 2 
for each $g \in \ell^{\,1}(S^*)$. Indeed, suppose that $f\,\star\,\delta_p  = 0$ for each $p\in K(S^*)$. Then this is immediate. Otherwise,
  $f\,\star\,\delta_p  \neq 0$  for some $p\in K(S^*)$, and then $\delta_q\,\star\,f\,\star\,\delta_p =0$ for each $q\in K(S^*)$, again giving the result.
 \s
  
\begin{theorem}\label{6.8a}
 The following  statements are equivalent:\s

{\rm (a)} for some infinite, countable, abelian group $G$, the algebra $\ell^{\,1}(G^*)$ is semisimple;\s

{\rm (b)}   for each  infinite, countable, abelian group $G$, the algebra $\ell^{\,1}(G^*)$ is semisimple;\s

{\rm (c)}  $\ell^{\,1}({\mathbb H})$ is semisimple;\s

{\rm (d)}  $\ell^{\,1}(\N^*)$ is semisimple.  
\end{theorem}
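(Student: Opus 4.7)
\medskip

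\noindent\textbf{Proof plan.}\quad
The strategy is to isolate a structural lemma: for every countable, infinite, abelian group $G$, the algebra $\ell^{\,1}(G^*)$ is semisimple if and only if $\ell^{\,1}(\mathbb H)$ is semisimple. Granting this, (a)$\Leftrightarrow$(b)$\Leftrightarrow$(c) is immediate, since (b)$\Rightarrow$(a) is trivial and the lemma makes semisimplicity of $\ell^{\,1}(G^*)$ independent of the choice of $G$; and (c)$\Leftrightarrow$(d) then follows by specialising the lemma to $G=\Z$ and invoking Proposition \ref{3.2b}. Throughout I will use the abelian simplification of Theorem \ref{5.1b}: because $\delta_p\,\star\,f\,\star\,\delta_q = \delta_{p+q}\,\star\,f$ and every $r\in K(S^*)$ can be written $r=e+r$ with $e\in E(K(S^*))$ (by Theorem \ref{str}(iii)), membership $f\in J(S^*)$ reduces to $\supp f\subset K(S^*)$ together with $\delta_r\,\star\,f=0$ for every $r\in K(S^*)$.

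To prove the lemma I fix a monomorphism $\gamma\colon\beta G\to C$ with $C$ a metrizable compact abelian group, set $V=\ker\gamma$, and identify $V$ with $\mathbb H$ via Theorem \ref{4.3}. I take as known the equality $K(V)=V\cap K(G^*)$, a standard consequence of the theory of compact right topological semigroups, together with the inclusion $E(G^*)\subset V$ already supplied by Theorem \ref{4.3}. For the easy direction, assume $\ell^{\,1}(G^*)$ is semisimple and take $g\in J(\mathbb H)\cong J(V)$, regarded as an element of $\ell^{\,1}(V)\subset \ell^{\,1}(G^*)$. Then $\supp g\subset K(V)\subset K(G^*)$; and for $r\in K(G^*)$, Theorem \ref{str}(iii) furnishes $e\in E(K(G^*))\subset V$ with $r+e=r$, whence $e\in E(K(V))$ and so $\delta_r\,\star\,g=\delta_r\,\star\,\delta_e\,\star\,g=0$, using $g\in J(V)$. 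Thus $g\in J(G^*)=\{0\}$, giving $g=0$.

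For the harder direction, assume $\ell^{\,1}(\mathbb H)$ is semisimple and take $f\in J(G^*)$. Decompose $f=\sum_{c\in C}f_c$ with $f_c=f\mid\gamma^{-1}(c)$; the sum is countable because $\supp f$ is. For any $r\in K(G^*)$ the pieces $\delta_r\,\star\,f_c$ have supports in the pairwise disjoint fibres $\gamma^{-1}(\gamma(r)+c)$, so $\delta_r\,\star\,f=0$ forces each $\delta_r\,\star\,f_c=0$; consequently each $f_c$ lies in $J(G^*)$. If $f\neq 0$, pick $c_0$ with $f_{c_0}\neq 0$ and apply Lemma \ref{6.7} to the fibre $\gamma^{-1}(c_0)$ to obtain a cancellable $u\in\beta G$ with $u+\gamma^{-1}(c_0)\subset V$. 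Then $h:=\delta_u\,\star\,f_{c_0}$ is nonzero (cancellability of $u$ makes left translation injective on supports) and has support in $V\cap K(G^*)=K(V)$. For $r\in K(V)\subset K(G^*)$,
\[
\delta_r\,\star\,h = \delta_{r+u}\,\star\,f_{c_0} = 0,
\]
because $r+u\in K(G^*)$ and $f_{c_0}\in J(G^*)$. Transplanted via $V\cong\mathbb H$, $h$ becomes a nonzero element of $J(\mathbb H)=\{0\}$, which is the desired contradiction.

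The main obstacle is the harder direction, where three ingredients must be aligned: the identity $K(V)=V\cap K(G^*)$ from the general theory of compact right topological semigroups; the fibrewise preservation of membership in $J(G^*)$, which is transparent in the abelian setting thanks to the pairwise disjoint supports across fibres of $\gamma$; and the use of Lemma \ref{6.7} to supply a cancellable element that carries a chosen fibre into $V$ without annihilating mass. Once these are in hand, the cycle closes and Proposition \ref{3.2b} delivers (c)$\Leftrightarrow$(d).
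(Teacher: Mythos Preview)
Your overall architecture matches the paper's: reduce everything to the lemma that $\ell^{\,1}(G^*)$ is semisimple iff $\ell^{\,1}(V)\cong\ell^{\,1}(\mathbb H)$ is, split $f$ along the fibres of $\gamma$, and use Lemma~\ref{6.7} to push a nonzero fibre piece into $V$. That is exactly what the paper does.

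However, your ``abelian simplification'' is a genuine error. You write $\delta_p\,\star\,f\,\star\,\delta_q=\delta_{p+q}\,\star\,f$ and then work with the one-sided condition $\delta_r\,\star\,f=0$. This identity is false: although $S$ is abelian, the semigroup $(\beta S,+)$ is \emph{not}; the paper stresses that in general $x+y\neq y+x$ for $x,y\in\beta S$. There is no reason for $p+x+q$ to equal $(p+q)+x$ when $p,q,x\in S^*$, so Theorem~\ref{5.1b} cannot be collapsed to a left-translation condition. Every place you invoke $\delta_r\,\star\,f=0$ (in both directions of the lemma) is therefore unjustified as written.

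The repair is straightforward and is precisely what the paper does: keep the two-sided condition $\delta_p\,\star\,f\,\star\,\delta_q=0$ throughout. Your fibre argument survives intact, since $\gamma(p+x+q)=\gamma(p)+c+\gamma(q)$ still separates the $f_c$; your translation by $u$ survives because $\delta_p\,\star\,(\delta_u\,\star\,f_{c_0})\,\star\,\delta_q=\delta_{p+u}\,\star\,f_{c_0}\,\star\,\delta_q$ with $p+u,q\in K(G^*)$; and for the ``easy'' direction the paper inserts an idempotent $e\in E(K(G^*))\subset V$ with $p+e=p$ and $e+q=q$, so that $\delta_p\,\star\,g\,\star\,\delta_q=\delta_p\,\star\,(\delta_e\,\star\,g\,\star\,\delta_e)\,\star\,\delta_q=0$ via Theorem~\ref{5.0}. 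One further omission: the claim $\supp g\subset K(V)$ for $g\in J(V)$ is not covered by Theorem~\ref{6.5} (which is about $S^*$, not $V$); you need Proposition~\ref{6.6} for this.
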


\begin{proof}  We consider the subset $V$ of $\beta G$ that was defined on page \pageref{V}.  We  note that
 $K(\N^*)\cap {\mathbb H} = K({\mathbb H})$, which is topologically isomorphic to the ideal $K(V)= V \cap K(\beta G)$ (see \cite[Theorem 1.65]{HS}).
Thus, the tequivalence of (a), (b), and (c)  will   follow once we have shown that, for a fixed  infinite, countable, abelian group $G$, 
the algebra $\ell^{\,1}(G^*)$ is semisimple  if and only if $\ell^{\,1}(V)$ is semisimple.

First, assume that there exists $f \in  J(G^*)$ with $f\neq 0$. Then  $\supp f \subset K(G^*)$ by Theorem \ref{6.5},  and so we may suppose that  
\[
f=\sum_{i=1}^\infty\alpha_i\delta_{x_i}\,,
\]
 where $\{x_i: i\in\N\}\subset K(G^*)$. By Theorem  \ref{5.1b}, $\delta_p\,\star\,f\,\star\, \delta_q =0$ for each $p,q \in K(G^*)$.   
  We partition the set $\{x_i: i\in\N\}$ into equivalence classes with respect
 to $\sim$, say into the disjoint subsets $\{E_m: m\in\N\}$, and set $f_m= f\mid E_m$ for $m\in\N$. Since $f\neq 0$, there exists $m_0\in \N$ with $f_{m_0}\neq 0$.
Now suppose that $m_1,m_2 \in\N$ with $m_1\neq m_2$. For each $x,y\in \beta G$ with 
$p+ x +q = p+y+q$ for some $p,q \in K(G^*)$, necessarily $x\sim y$, and so  the elements $\delta_p\,\star\,f_{m_1}\,\star\, \delta_q  $ and 
$\delta_p\,\star\,f_{m_2}\,\star\, \delta_q $ have disjoint support for each $p,q \in K(G^*)$.  Hence $\delta_p\,\star\,f_{m}\,\star\, \delta_q =0$ for each $m\in\N$ and 
each $p,q \in K(G^*)$.  Since $\supp f_m \subset K(G^*)$, Theorem \ref{5.0} applies to show that  $f_m \in J(G^*)$ for each $m\in\N$; in particular, $f_{m_0} \in J(G^*)$.

By Lemma \ref{6.7}, there is a cancellable element $u\in\beta G$ such that $u+x \in V$ for each $x \in E_{m_0}$.  Thus $\delta_u\,\star\,f_{m_0}\neq 0$ and 
$\delta_u\,\star\,f_{m_0}\in J(V)$.   This shows that $J(V)\neq \{0\}$.\s

Second,  assume that there exists $f \in  J(V)$ with $f\neq 0$.  Then we have $\supp f \subset K(V)$ by Proposition \ref{6.6}; 
in particular, $f\in \ell^{\,1}(K(V))$, and so, again, Theorem \ref{5.0} applies.

 Take  $r \in E(K(\beta G))\subset V$. Since  $r+ K(V) +r$ is a subgroup of $V$, we have  $\delta_r \,\star f\,\star\,\delta_r =  0$.
Now take $p,q \in K(\beta G)$. By Theorem 2.1(iii), there exists  $r\in E(K)$  with   $p+ r =p\,$ and  $r+q=q$, and so 
\[
\delta_p\,\star\, f\,\star\,\delta_q = \delta_p\,\star\,\delta_r\,\star\,f\,\star\, \delta_r\,\star\,\delta_q =0\,.
\]
By Theorem \ref{5.0}, (a) $\Rightarrow$ (c), $f \in  J(G^*)$. This shows that $J(G^*)\neq\{0\}$.

By Proposition \ref{3.2b},  $\ell^{\,1}(\N^*)$ is semisimple if and only if $\ell^{\,1}(\Z^*)$ is semi\-simple, and so (d) is also equivalent to the other statements.
\end{proof}\s
 
 \begin{theorem}  Let   $S$ be a cancellative, countable, abelian semigroup. Consider the following conditions on $(S^*, +)$:\s
 
{\rm (a)}    there exist $n\in \N$ and two disjoint sets $\{x_1,\dots,x_n\}$ and $\{y_1,\dots,y_n\}$ of $K(S^*)$  such that, for each $p,q \in K(S^*)$, the set 
$\{p+x_1+q,\dots,p+x_n+q\}$  is a permutation of the set $\{p+y_1+q,\dots,p+y_n+q\}\,$; \s

{\rm (b)}  $J(S^*,+) \neq \{0\}$ and/or $J_0(S^*,+) \neq \{0\}$;\s

{\rm (c)}  there is a non-empty, finite subset $F$ of distinct elements of $K(S^*)$ and $x \in F$ such that, for  each $p,q \in K(S^*)$, there exist
  $y\in F$ with  $y\neq x$  and $p+y+q =p+x+ q$.\s

\noindent Then {\rm (a)}  $\Rightarrow$ {\rm (b)}  $\Rightarrow$ {\rm (c)}.
\end{theorem}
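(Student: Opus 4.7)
For (a)~$\Rightarrow$~(b), the natural witness is the finite-support difference
\begin{equation*}
f = \sum_{i=1}^n \delta_{x_i} - \sum_{i=1}^n \delta_{y_i},
\end{equation*}
which is a non-zero element of $\C K(S^*)$ because the two sets are disjoint. The permutation clause in (a) says exactly that $\sum_{i=1}^n \delta_{p+x_i+q} = \sum_{i=1}^n \delta_{p+y_i+q}$ for every $p,q \in K(S^*)$, so $\delta_p\,\star\,f\,\star\,\delta_q = 0$ for all such $p,q$. Applying Theorem \ref{5.1b} (and the parallel characterization for $J_0(S^*)$ noted right after Theorem \ref{5.0}) gives $f \in J_0(S^*) \cap J(S^*)$, so (b) holds in both forms.

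For (b)~$\Rightarrow$~(c) I would argue by contrapositive. Assume $\lnot$(c): for every non-empty, finite $F \subset K(S^*)$ and every $x\in F$, there exists $(p,q) \in K(S^*)^2$ with $p+y+q \neq p+x+q$ for all $y\in F\setminus\{x\}$. Since $J_0(S^*) \subset J(S^*)$, it suffices to show $J(S^*)=\{0\}$. Suppose for contradiction that $f\in J(S^*)\setminus\{0\}$. By Theorem \ref{6.5} we have $\supp f \subset K(S^*)$, and by Theorem \ref{5.1b} we have $\delta_p\,\star\,f\,\star\,\delta_q = 0$ for every $p,q\in K(S^*)$.

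Fix some $z_0 \in \supp f$ and enumerate $\supp f\setminus\{z_0\}$ as $\{z_1,z_2,\dots\}$. For each $n\geq 1$, apply $\lnot$(c) to $F_n = \{z_0,z_1,\dots,z_n\}$ with $x = z_0$ to choose $(p_n, q_n) \in K(S^*)^2$ such that $p_n+z_i+q_n \neq p_n+z_0+q_n$ for $i=1,\dots,n$. The coefficient of $\delta_{p_n+z_0+q_n}$ in the zero element $\delta_{p_n}\,\star\,f\,\star\,\delta_{q_n}$ then collapses to
\begin{equation*}
f(z_0) + \sum_{i>n,\ p_n+z_i+q_n = p_n+z_0+q_n} f(z_i) = 0,
\end{equation*}
so $|f(z_0)| \leq \sum_{i>n}|f(z_i)|$. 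The right-hand side is the tail of the absolutely convergent series $\|f\|_1$ and therefore tends to $0$ as $n\to\infty$, contradicting $f(z_0)\neq 0$. If $\supp f$ happens to be finite of size $N$, the case $n=N-1$ already makes the tail sum empty and forces $f(z_0)=0$, giving the same contradiction.

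The main obstacle I expect is precisely the possibility that $f$ lies in $J(S^*)\setminus J_0(S^*)$ with genuinely infinite support: in that regime one cannot simply set $F=\supp f$ in~(c), and the naive pigeonhole argument breaks down because the ``collision partner'' of $z_0$ may depend on $(p,q)$ in an unbounded way. The essential device is the $\ell^{\,1}$-tail estimate, which converts the \emph{finite} separating witnesses produced by $\lnot$(c) into a single quantitative bound $|f(z_0)|\leq \sum_{i>n}|f(z_i)|$ that must hold for every $n$; summability of $f$ then closes the argument.
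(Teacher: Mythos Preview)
Your proof is correct and follows essentially the same approach as the paper. For (a)~$\Rightarrow$~(b) you construct the identical witness $f=\sum\delta_{x_i}-\sum\delta_{y_i}$ and invoke Theorem~\ref{5.1b}; for (b)~$\Rightarrow$~(c) the paper argues directly---choosing a single $k$ with $\sum_{i>k}|\alpha_i|<|\alpha_1|$ and showing that $F=\{x_1,\dots,x_k\}$, $x=x_1$ witnesses~(c)---whereas you run the contrapositive, producing a sequence $(p_n,q_n)$ and letting $n\to\infty$, but the underlying $\ell^{\,1}$-tail estimate is identical.
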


 \begin{proof} 
{\rm (a)} $\Rightarrow$ {\rm (b)}   Let $n\in \N$ and   $\{x_1,\dots, x_n\}$ and $\{y_1,\dots, y_n\}$ be as specified in (a), and  set 
\[
f=  \sum_{i=1}^n \delta_{x_i} - \sum_{i=1}^n\delta_{y_i}\,,
\]
 so that $f \in \C S$ with $\supp f \subset K(S^*)$ and $f \neq 0$.  Take $p,q \in K(S^*)$.  Then  clearly $\delta_p\,\star\,f\,\star\, \delta_q =  0$, and so, 
by Theorem \ref{5.1b}, $f \in J_0(S^*)\cap J(S^*)$. Hence $J(S^*,+) \neq \{0\}$ and  $J_0(S^*,+) \neq \{0\}$.\s
 
 {\rm (b)}  $\Rightarrow$ {\rm (c)} Take $f\in J(S^*)$ with $f\neq 0$; we may suppose that $\lV f \rV=1$.  By  Theorem \ref{6.5}, $\supp f \subset K(S^*)$,
 and so $f$ has the form $\sum_{i=1}^\infty \alpha_i\delta_{x_i}$, where $\alpha_i\in\C\setminus\{0\}\,\;(i\in\N)$, $\sum_{i=1}^\infty \lv \alpha_i\rv =1$, 
and $\{x_i: i\in\N\}$ is a set of distinct points in $K(S^*)$.   Choose $k\in\N$ such that $\sum_{i=k+1}^\infty \lv \alpha_i\rv < \lv \alpha_1\rv $,
 and set $F=  \{x_1,\dots,x_k\}$,  so that  $F$ is a non-empty, finite subset  of distinct elements of $K(S^*)$.  Set $g= \sum_{i=1}^k \alpha_i\delta_{x_i}$ and
 $h= \sum_{i=k+1}^\infty \alpha_i\delta_{x_i}$, so that $g, h\in \ell^{\,1}(\N^*)$, $f=g+h$,  $\lV g\rV \geq \lv \alpha_1\rv$, and $\lV h\rV < \lv \alpha_1\rv$.  
Set $F= \{x_1,\dots,x_k\}$ and $x = x_1$.

By Theorem \ref{5.1b}, $  \delta_p\,\star\,f \,\star\,\delta_q=0$ for each $p,q \in K(S^*)$.  Take $p,q \in K(S^*)$, and assume  towards a contradiction that, 
for each $ y \in F$ with $y\neq x$, we have $p+x+q \neq p+y+ q$.   Then $\lV  \delta_p\,\star\,g \,\star\,\delta_p \rV= \lV g\rV\geq \lv \alpha_1\rv$ 
and $\lV  \delta_p\,\star\,h \,\star\,\delta_p \rV < \lv \alpha_1\rv $,  and so $\lV  \delta_p\,\star\,f \,\star\,\delta_p \rV  >0$, 
a contradiction of the fact that $  \delta_p\,\star\,f \,\star\,\delta_q=0$. Thus  there exist $ y \in F$ with $y\neq x$ such that  $p+y+q = p+x+ q$. 
 \end{proof}\s
 
The question whether or not clause (a) of the above theorem holds is a well-known open question in the theory of Stone--\v{C}ech semigroup compactifications;
 in particular, it is open for the case  where $S = (\N,+)$.   Indeed, it may be that there exist  $x,y\in K(S^*)$ with 
 $x\neq y$ and such that $p+x+y =p+y+ q$ for each $p,q \in K(S^*)$, a condition that implies (a).  Unfortunately, we do not know whether the conditions in 
clauses (a) and (c) are equivalent.
 \medskip
 
 \section{Measure algebras}
 
\noindent Let $S$  be a semigroup.  Then  $M(\beta S)$ denotes  the Banach space of complex, regular Borel measures  on the compact space $\beta S$, 
with the total variation norm.  There are two Arens products, $\Box$  and $\,\Diamond\,$,  on $M(\beta S)$;  they are defined by  identifying 
$M (\beta S)$  with $\ell^{\,1}(S)''$.  Full details of this identification are given  in \cite[Chapter 7]{DLS1}.    
 The restriction of the products  $\Box$ and $\Diamond$ to elements $s \in \beta S$ (when $s$ is identified with the point mass $\delta_s \in M(\beta S)$) 
 coincides with the previous definitions of $\Box$ and $\Diamond$ on $\beta S$. We shall consider  $M (\beta S)$ to be a Banach algebra with respect to the product 
$\Box$.
 
Set
\[
J_1 = \{\mu \in M(\beta S): \delta_s\,\Box\, \mu =\mu\,\;(s\in S),\;\,\mu(\beta S) = 0\}\,.
\]
It is easily seen that $J_1$ is a closed, nilpotent  ideal of index $2$ in $M(\beta S)$, and hence $J_1 \subset J(M(\beta S))$. 
  Thus  $M(\beta S)$ is not semisimple whenever $J_1 \neq \{0\}$.
  
  In \cite[Proposition 7.21]{DLS1}, it is shown that $J_1$ is infinite dimensional for many semigroups $S$, including the case where $S$ 
is an amenable group; in  fact, the dimension of $J_1$ is `large'  \cite[Theorem (7.3)(ii)(b)\,]{Pat} in this later case, and so  the dimension of
$J(M(\beta S))$ is also large. See also  \cite[Theorem 7.22]{DLS1},
 where a somewhat larger ideal than $J_1$ -- say it is $J_2$ -- is constructed, and it is shown that
 $J_2 \subset  J(M(\beta S))$.     The original result that $(M(\beta G), \Box)$ is not semisimple whenever $G$ is an amenable group is due to Granirer \cite{Gran}. 
 
 This leaves open the question of a description of the radical of  $M (\beta G)$  whenever $G$ is a non-amenable group, such as $\F_2$. 
 It is a conjecture  that $M(\beta\, \F_2)$ is semisimple.  The following is a partial remark towards this; the hypotheses on $G$ in the following theorem
 are satisfied by  $\F_2$.\s
 
 \begin{proposition}\label{6.10}
Let $G$ be a countable group that can be embedded in a compact topological group.
 Assume that $\ell^{\,1}(\N^*,+)$ is not semisimple. Then $(M(\beta G), \Box)$ is not semisimple.
 \end{proposition}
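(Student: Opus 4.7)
The plan is to combine the hypothesis with Theorems \ref{4.3} and \ref{6.8a} to produce a nonzero $f \in J(\ell^{\,1}(V))$ supported in $K(V)$, and then to show that $f$, viewed as a measure in $M(\beta G)$, belongs to $J(M(\beta G),\Box)$.  Let $\gamma : \beta G \to C$ be the continuous extension of the embedding of $G$ into the compact topological group $C$, and set $V = \ker\gamma$.  Theorem \ref{4.3} gives $V \cong {\mathbb H}$ as compact right topological semigroups, so $\ell^{\,1}(V) \cong \ell^{\,1}({\mathbb H})$ as Banach algebras; combined with Theorem \ref{6.8a} and the hypothesis, $\ell^{\,1}(V)$ is not semisimple. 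Transferring Proposition \ref{6.6} across the isomorphism supplies $f \in J(\ell^{\,1}(V))\setminus\{0\}$ with $\supp f \subset K(V) = V \cap K(\beta G)$.

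The first key step is to show $\delta_p \,\Box\, f \,\Box\, \delta_q = 0$ for every $p, q \in K(\beta G)$, following the pattern of the second half of the proof of Theorem \ref{6.8a}.  The crucial structural point is that every idempotent in $\beta G$ lies in $V$: if $r^2 = r$ then $\gamma(r) = \gamma(r)^2$ in the group $C$, forcing $\gamma(r) = e_C$.  Hence $E(K(\beta G)) = E(K(V))$, and Theorem \ref{str}(iii) applied to $\beta G$ (with $p, q$ interchanged) supplies $r \in E(K(V))$ with $pr = p$ and $rq = q$.  We then write $\delta_p \,\Box\, f \,\Box\, \delta_q = \delta_p \,\Box\, (\delta_r \,\Box\, f \,\Box\, \delta_r) \,\Box\, \delta_q$, and the middle factor lies in $J(\ell^{\,1}(V))$ with support in the subgroup $rK(V)r$ of $V$, so it vanishes by Proposition \ref{2.20}.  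Writing $f = \sum_i \alpha_i \delta_{x_i}$ and using that $K(\beta G)$ is an ideal of $\beta G$, a direct expansion at the level of point masses now yields, for any $y_1, y_2, y_3 \in \beta G$,
\[
\delta_{y_1} \,\Box\, f \,\Box\, \delta_{y_2} \,\Box\, f \,\Box\, \delta_{y_3} \,\Box\, f \;=\; \sum_{i,k} \alpha_i \alpha_k \, \delta_{y_1 x_i y_2} \,\Box\, f \,\Box\, \delta_{y_3 x_k} \;=\; 0,
\]
since $y_1 x_i y_2, y_3 x_k \in K(\beta G)$.

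The second key step is to promote this point-mass identity to an identity for arbitrary measures: $\mu_1 \,\Box\, f \,\Box\, \mu_2 \,\Box\, f \,\Box\, \mu_3 \,\Box\, f = 0$ for all $\mu_j \in M(\beta G)$.  Once this is known, setting $\mu_1 = \mu_2 = \mu_3 = \mu$ gives $(\mu \,\Box\, f)^3 = 0$, so $\mu \,\Box\, f$ is nilpotent, hence quasi-nilpotent, for every $\mu \in M(\beta G)$, and Theorem \ref{1.1} gives $f \in J(M(\beta G))$; as $f \neq 0$, $M(\beta G)$ is not semisimple.  The main obstacle is precisely this promotion: the first Arens product on $M(\beta G) = \ell^{\,1}(G)''$ is weak-$*$ continuous only in its first variable, so a naive Fubini argument fails on the inner factors.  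One must exploit the explicit formula $\mu \,\Box\, f = \sum_i \alpha_i (R_{x_i})_* \mu$ (valid since each $R_{x_i}$ is continuous on $\beta G$), together with the ideal structure of $K(\beta G)$, to iteratively reduce each general $\mu_j$ to a point-mass integration and bootstrap from the point-mass identity above.
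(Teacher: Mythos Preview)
Your overall strategy coincides with the paper's: produce a nonzero $f\in J(V)$ with $\supp f\subset K(V)\subset K(\beta G)$, show the triple-product vanishing at the level of $\ell^{\,1}(\beta G)$, and then promote to $M(\beta G)$ so that $({\rm M}\,\Box\,f)^3=0$ for every ${\rm M}\in M(\beta G)$, whence $f\in J(M(\beta G))$ by Theorem~\ref{1.1}. Your first key step (the reduction to $\delta_p\,\Box\,f\,\Box\,\delta_q=0$ for $p,q\in K(\beta G)$ via an idempotent $r\in E(K(\beta G))\subset V$) is exactly what underlies Theorem~\ref{5.0}(a)$\Rightarrow$(b); the paper simply cites that theorem to obtain
\[
g_1\,\star\,f\,\star\,g_2\,\star\,f\,\star\,g_3\,\star\,f=0\qquad(g_1,g_2,g_3\in\ell^{\,1}(\beta G)),
\]
which is the same as your point-mass identity after a trivial $\ell^{\,1}$-extension.

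Where your proposal falls short is precisely the second key step: you do not carry out the promotion from $\ell^{\,1}(\beta G)$ to $M(\beta G)$. You correctly flag the one-sided weak-$*$ continuity of $\Box$ as the obstacle, and you correctly note the formula $\mu\,\Box\,f=\sum_i\alpha_i(R_{x_i})_*\mu$, but the sentence ``one must\ldots iteratively reduce each general $\mu_j$ to a point-mass integration and bootstrap'' is a description of a plan, not a proof. As written, this is a gap: nothing in your text establishes $\mu_1\,\Box\,f\,\Box\,\mu_2\,\Box\,f\,\Box\,\mu_3\,\Box\,f=0$ for general $\mu_j\in M(\beta G)$.

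The paper's route through this step is different from what you propose. It does \emph{not} use the pushforward formula; instead it takes the $\ell^{\,1}(\beta G)$ identity above and successively replaces each $g_i$ by a net in $\ell^{\,1}(\beta G)$ converging weak-$*$ to ${\rm M}$, exploiting the weak-$*$ continuity of the first Arens product in its left variable at each stage, to arrive at ${\rm M}\,\Box\,f\,\Box\,{\rm M}\,\Box\,f\,\Box\,{\rm M}\,\Box\,f=0$. If you wish to follow the paper, you should make this three-step approximation explicit rather than gesture at an alternative bootstrap you do not execute; if you prefer your pushforward approach, you must actually perform the iteration and show how the ideal property of $K(\beta G)$ lets you reduce each factor $\delta_{x_i}\,\Box\,\mu\,\Box\,\delta_{x_j}$ back to the point-mass case.
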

 
 \begin{proof} Take  $V$ to be the subset of $G$ defined on  page \pageref{V}. Since $\ell^{\,1}(\N^*)$ is not semisimple, 
it follows from Proposition \ref{6.8a}  that  there exists $f\in J(V)$ with $f\neq 0$ and such that 
$\supp f \subset K(V) = K(G^*) \cap V \subset K(G^*)$.  Thus we can apply Theorem \ref{6.5} to see that 
\[
g_1\,\star\,f\,\star\,g_2\,\star\,f\,\star\,g_3\,\star\,f= 0\quad (g_1,g_2,g_3 \in \ell^{\,1}(\beta G))\,.
\]
 
 Now take ${\rm M} \in M(\beta G)$.  First replace $g_3$ by a net in $\ell^{\,1}(\beta G)$ that converges to ${\rm M} $ in the weak-$*$ topology.  Then 
\[
g_1\,\star\,f\,\star\,g_2\,\star\,f\,\Box\,{\rm M}\,\Box\,f= 0\quad (g_1,g_2  \in \ell^{\,1}(\beta\, \F_2))\,.
\]
 Similarly we see successively that 
 ${\rm M}\,\Box\,f\,\Box\,{\rm M}\,\Box\,f\,\Box\,{\rm M}\,\Box\,f= 0$. Thus  ${\rm M}\,\Box\,f$ is nilpotent of index at most $3$ in $M(\beta G)$ for
 each ${\rm M} \in M(\beta G)$, and so, by Theorem \ref{1.1}, $f \in J(M(\beta G))$.  This shows that $M(\beta G) $ is not semisimple.
 \end{proof}\medskip

\section*{Acknowledgements}   This research was commenced when the first and second authors were invited to visit the University of Witwatersrand in December, 2011. 
They are very grateful for the generous hospitality received.

\newcommand{\email}{\texttt}

\noindent   H.\ Garth\ Dales, \\
 Department of  Mathematics and Statistics\\
Fylde College\\
University of Lancaster\\
Lancaster LA1 4YF\\
United Kingdom 
\email{g.dales@lancaster.ac.uk}

\medskip

\noindent D.~STRAUSS,\\ 
Department of Pure Mathematics\\ 
University of Leeds\\  
Leeds, LS2 9JT\\ 
United Kingdom
\email{d.strauss@leeds.ac.uk}

\medskip

\noindent Y.~ZELENYUK, and Yu.~ZELENYUK\\ 
School of Mathematics,\\
University of Witwatersrand,\\
Wits, 2050, \\
Johannesburg,\\
South Africa
\email{Yevhen.Zelenyuk@wits.ac.za}\\
\email{Yuliya.Zelenyuk@wits.ac.za}

\end{document}